\documentclass[10pt,draft]{article}

\usepackage{amsmath,amsthm,amsfonts,amssymb, color}
\usepackage[left=46mm,top=47mm,right=40mm,bottom=60mm]{geometry} 
\usepackage[T1]{fontenc}
\usepackage[utf8]{inputenc}
\usepackage{authblk}

\makeatletter
\renewcommand\@biblabel[1]{}
\makeatother

\newtheorem{thm}{Theorem}[section]
\newtheorem{cor}[thm]{Corollary}
\newtheorem{lem}[thm]{Lemma}
\newtheorem{rem}[thm]{Remark}

\title{\textbf{\large Existence, Uniqueness and Convergence of Simultaneous Distributed-Boundary Optimal Control Problems}}
\author[1]{\textbf{\normalsize Claudia M. Gariboldi}}
\author[2]{\textbf{\normalsize Domingo A. Tarzia}}
\affil[1]{\normalsize Departamento de Matem\'atica, FCEFQyN, Univ.
Nac. de R\'io Cuarto, Ruta 36 Km 601, 5800 R\'io Cuarto, Argentina.
E-mail: cgariboldi@exa.unrc.edu.ar} \affil[2]{\normalsize
Departamento de Matem\'atica-CONICET, FCE, Univ. Austral, Paraguay
1950, S2000FZF Rosario, Argentina. E-mail: DTarzia@austral.edu.ar}

\begin{document}
\date{}
\maketitle

\begin{abstract}
We consider a steady-state heat conduction problem $P$ for the
Poisson equation with mixed boundary conditions in a bounded
multidimensional domain $\Omega$. We also consider a family of
problems $P_{\alpha}$ for the same Poisson equation with mixed
boundary conditions being $\alpha>0$ the heat transfer coefficient
defined on a portion $\Gamma_{1}$ of the boundary. We formulate
simultaneous \emph{distributed and Neumann boundary} optimal control
problems on the internal energy $g$ within $\Omega$ and the heat
flux $q$, defined on the complementary portion $\Gamma_{2}$ of the
boundary of  $\Omega$ for quadratic cost functional. Here the
control variable is the vector $(g,q)$. We prove existence and
uniqueness of the optimal control
$(\overline{\overline{g}},\overline{\overline{q}})$ for the system
state of $P$, and
$(\overline{\overline{g}}_{\alpha},\overline{\overline{q}}_{\alpha})$
for the system state of $P_{\alpha}$, for each $\alpha>0$, and we
give the corresponding optimality conditions. We prove strong
convergence, in suitable Sobolev spaces, of the vectorial optimal
controls, system and adjoint states governed by the problems
$P_{\alpha}$ to the corresponding vectorial optimal control, system
and adjoint states governed by the problem $P$, when the parameter
$\alpha$ goes to infinity. We also obtain estimations between the
solutions of these vectorial optimal control problems and the
solution of two scalar optimal control problems characterized by
fixed $g$ (with boundary optimal control $\overline{q}$) and fixed
$q$ (with distributed optimal control $\overline{g}$), respectively,
for both cases $\alpha>0$ and $\alpha=\infty$.

\end{abstract}

\textbf{keywords:} Simultaneous optimal control problems, mixed
elliptic problems, optimality condition, elliptic variational
equalities, vectorial optimal control problems.

{\thispagestyle{empty}} 

\section{\large Introduction}
We consider a bounded domain $\Omega $ in ${\mathbb R}^{n}$ whose
regular boundary $\Gamma$ consist of the union of the two disjoint
portions $\Gamma _{1}$ and $\Gamma _{2}$ with $med(\Gamma_{i})>0$
for $i=1,2$. We consider the following steady-state
heat conduction problems $P$ and $P_{\alpha }$ (for each parameter value
$\alpha >0)$ respectively, with mixed boundary conditions:
\begin{equation}
-\Delta u=g\,\ \text{in }\Omega \ \ \,\,\,\,\,\,u\big|_{\Gamma
_{1}}=b\,\,\,\,\,\,\,\,-\frac{\partial u}{\partial n}\big|_{\Gamma
_{2}}=q \label{P}
\end{equation}
\begin{equation}
-\Delta u=g\,\ \text{in }\Omega \ \ \,\,\,\,\,\,-\frac{\partial u}{\partial n%
}\big|_{\Gamma _{1}}=\alpha (u-b)\,\,\,\,\,\,\,\,-\frac{\partial
u}{\partial n}\big|_{\Gamma _{2}}=q  \label{Palfa}
\end{equation}
where $g$ is the internal energy in $\Omega $, $b>0$ is the
temperature on $ \Gamma_{1}$ for (\ref{P}) and the temperature of
the external neighborhood of $\Gamma_{1}$ for (\ref{Palfa}), $q$ is
the heat flux on $\Gamma_{2}$ and $\alpha >0$ is the heat transfer
coefficient on $\Gamma_{1}$ (Newton law or Robin condition on
$\Gamma _{1}$). The following hypothesis: $g\in
L^{2}(\Omega )$, $q\in L^{2}(\Gamma_{2})$ and $b\in
H^{\frac{1}{2}}(\Gamma _{1})$ is assumed to hold. Problems (\ref{P}) and (\ref{Palfa})
can be considered as the steady-state Stefan problem for suitable
data $q$, $g$ and $b$, see Tarzia (1979) or Tabacman and Tarzia
(1989).

We denote by $u_{(g,q)}$ and $u_{(\alpha,g,q)}$ the unique solutions
of the elliptic problems (\ref{P}) and (\ref{Palfa}), respectively,
whose variational formulations are given, as in
 Kinderlehrer and Stampacchia (1980), by:
\begin{equation}
a(u_{(g,q)},v)=L_{(g,q)}(v),\text{ }\forall v\in V_{0},\text{
}u_{(g,q)}\in K \label{Pvariacional}
\end{equation}
\begin{equation}
a_{\alpha }(u_{(\alpha,g,q)},v)=L_{(\alpha,g,q)}(v),\text{ }\forall v\in V,\text{ }%
u_{(\alpha,g,q)}\in V  \label{Palfavariacional}
\end{equation}
where
\[
V=H^{1}(\Omega ),\quad V_{0}=\{v\in V/\,v\big|_{\Gamma
_{1}}=0\},\quad K=v_{0}+V_{0},
\]
\[
R=L^{2}(\Gamma_{1}),\quad H=L^{2}(\Omega ),\quad Q=L^{2}(\Gamma_{2})
\]
for $v_{0}\in V$ given, with $v_{0}\big|_{\Gamma _{1}}=b$ and
\begin{equation}
(g,h)_{H}=\int_{\Omega }gh\,dx;\quad (q,\eta )_{Q}=\int_{\Gamma
_{2}}q\eta \,d\gamma , \quad (b,v)_{R}=\int_{\Gamma _{1}}bv\,d\gamma
\nonumber
\end{equation}
\[
a(u,v)=\int_{\Omega }\nabla u.\nabla vdx;\,\,\,\ \ a_{\alpha
}(u,v)=a(u,v)+\alpha \,\,(u,v)_{R}\,\,
\]
\[
L_{(g,q)}(v)=(g,v)_{H}-(q,v)_{Q};\quad
L_{(\alpha,g,q)}(v)=\,L_{(g,q)}(v)+\alpha \,\,(b,v)_{R}.
\]

The bilinear form $a$ is coercive on $V_{0}$, with coerciveness
constant $\lambda >0$ and the bilinear form $a_{\alpha}$ is coercive
on $V$ with coerciveness constant
$\lambda_{\alpha}=\lambda_{1}min(1,\alpha)$, where $\lambda_{1}>0$
is the coerciveness constant for the bilinear form $a_{1}$, see
Kinderlehrer and Stampacchia (1980) or Tabacman and Tarzia (1989).

We formulate the following simultaneous \emph{distributed and
Neumann boundary} optimal control problems, see Lions (1968) or
Tr\"{o}ltzsch (2010):
\begin{equation}
\text{Find }\quad
(\overline{\overline{g}},\overline{\overline{q}})\in H\times
U_{ad}\quad\text{ such that }\quad
J(\overline{\overline{g}},\overline{\overline{q}})=\min\limits_{g\in
H,q\in U_{ad}} \text{ }J(g,q)  \label{PControl}
\end{equation}
\begin{equation}
\text{Find}\quad(\overline{\overline{g}}_{\alpha},\overline{\overline{q}}_{\alpha})\in
H\times U_{ad}\quad\text{such that}\quad
J_{\alpha}(\overline{\overline{g}}_{\alpha},\overline{\overline{q}}_{\alpha})=\min\limits_{g\in
H,q\in U_{ad}}J_{\alpha}(g,q) \label{PControlalfa}
\end{equation}
with $U_{ad}=\left\{ q\in Q:q\geq 0\text{ on }\Gamma _{2}\right\}$
and the cost functionals $J:H\times Q{\rightarrow}{\mathbb
R}_{0}^{+}$ and $J_{\alpha }:H\times Q{\rightarrow}{\mathbb
R}_{0}^{+}$ being given by:
\begin{equation}
J(g,q)=\frac{1}{2}\left\| u_{(g,q)}-z_{d}\right\|
_{H}^{2}+\frac{M_{1}}{2}\left\|
g\right\|_{H}^{2}+\frac{M_{2}}{2}\left\| q\right\|_{Q}^{2}
\label{J}
\end{equation}
\begin{equation}
J_{\alpha}(g,q)=\frac{1}{2}\left\| u_{(\alpha,g,q)}-z_{d}\right\|
_{H}^{2}+\frac{M_{1}}{2}\left\|
g\right\|_{H}^{2}+\frac{M_{2}}{2}\left\| q\right\| _{Q}^{2}
\label{Jalfa}
\end{equation}
where $z_{d}\in H$, $u_{(g,q)}$ and $u_{(\alpha,g,q)}$ are the
unique solutions of the elliptic variational equalities
(\ref{Pvariacional}) and (\ref{Palfavariacional}) respectively, and
the positive constants $M_{1}$ and $M_{2}$ are given. We remark that we denote the control variables by $g$ and $q$, 
 these two variables corresponding usually the internal energy and the heat flux respectively, in heat transfer problems.

The use of the variational equality theory in connection with
optimization and optimal control problems was done in Belgacem, El
Fekih and Metoui (2003), Bensoussan (1974), Casas (1986), Casas and
Raymond (2006), Kirchner, Meidner and Vexler (2011), Mignot and Puel
(1984).

In Section 2, we obtain the existence and uniqueness of the
vectorial optimal control
$(\overline{\overline{g}},\overline{\overline{q}})$ of the problem
(\ref{PControl}) and of the vectorial optimal control
$(\overline{\overline{g}}_{\alpha},\overline{\overline{q}}_{\alpha})$
of the problem (\ref{PControlalfa}), for each $\alpha>0$. We also
give the optimality conditions in relation to the adjoint state
$p_{(\overline{\overline{g}},\overline{\overline{q}})}$ for
(\ref{PControl}) and
$p_{(\alpha,\overline{\overline{g}}_{\alpha},\overline{\overline{q}}_{\alpha})}$
for (\ref{PControlalfa}).

In Section 3, we obtain estimations between the first component of
the simultaneous optimal control $\overline{\overline{g}}$ and the
scalar optimal control $\overline{g}$ studied in Gariboldi and
Tarzia (2003) (see optimization problem (\ref{PControlJ1})), and the
second component of the simultaneous optimal control
$\overline{\overline{q}}$ and the scalar optimal control
$\overline{q}$ studied in Gariboldi and Tarzia (2008) (see
optimization problem (\ref{PControlJ2})). In the optimal control
problems (\ref{PControl}) and (\ref{PControlalfa}) we have
considered two control variables simultaneously, that is the
solution is a vectorial optimal control, while that in the optimal
control problems, given in Gariboldi and Tarzia (2003) and Gariboldi
and Tarzia (2008) respectively, we have considered only one control
variable, namely the solutions are scalar optimal controls.
Moreover, we characterize the optimal control
$(\overline{\overline{g}},\overline{\overline{q}})$ as a fixed point
on $H\times Q$ for a suitable operator $W$. In similar way, we
obtain estimations for the optimal controls of the problems
$P_{\alpha}$, for each $\alpha>0$, and we characterize the optimal
control
$(\overline{\overline{g}}_{\alpha},\overline{\overline{q}}_{\alpha})$
as a fixed point on $H\times Q$ for a suitable operator
$W_{\alpha}$.

In Section 4, we prove the strongly convergence, in suitable Sobolev
spaces, of the optimal controls
$(\overline{\overline{g}}_{\alpha},\overline{\overline{q}}_{\alpha})$
of the problems (\ref{PControlalfa}) to the optimal control
$(\overline{\overline{g}},\overline{\overline{q}})$ of the problem
(\ref{PControl}), of the system states
$u_{(\alpha,\overline{\overline{g}}_{\alpha},\overline{\overline{q}}_{\alpha})}$
to the system state
$u_{(\overline{\overline{g}},\overline{\overline{q}})}$ and of the
adjoint sates
$p_{(\alpha,\overline{\overline{g}}_{\alpha},\overline{\overline{q}}_{\alpha})}$
to the adjoint state
$p_{(\overline{\overline{g}},\overline{\overline{q}})}$, when the
parameter $\alpha$ goes to infinity. We also prove the convergence
of the corresponding cost functional when $\alpha$ goes to infinity.

This asymptotic behavior can be considered very important in the
optimal control of heat transfer problems because the Dirichlet
boundary condition, given in (\ref{P}), can be approximated by the
relevant physical condition given by the Newton law or the Robin
boundary condition given in (\ref{Palfa}), see Carslaw and Jaeger
(1959). Therefore, the goal of this paper is to approximate a
Dirichlet boundary condition in a vectorial optimal control problem,
governed by an elliptic variational equality, by a Robin boundary
condition in a family of vectorial optimal control problems,
governed also by elliptic variational equalities, for a large
positive coefficient $\alpha$. Particular cases of our results can
be considered the ones given in Gariboldi and Tarzia (2003) when the
scalar control variable is the internal energy $g$ for both state
systems (\ref{P}) and (\ref{Palfa}), and in Gariboldi and Tarzia
(2008) when the scalar control variable is the heat flux $q$ on the
boundary $\Gamma_{2}$ for both state systems (\ref{P}) and
(\ref{Palfa}). In Belgacem, El Fekih and Metoui (2003) the control
variable is the temperature $b$ on the boundary $\Gamma_{1}$ for the
state system (\ref{P}), and the temperature of the external
neighborhood $b$ on the boundary $\Gamma_{1}$ for the state systems
(\ref{Palfa}), this being essentially different with respect to the present vectorial
optimal control problems.

\section{\large Existence and Uniqueness of Optimal Controls}

\subsection{Problem $P$ and its Optimal Control Problem}

Let $C:H\times Q\,{\rightarrow }\,V_{0}$ be the application defined
by $C(g,q)=u_{(g,q)}-u_{(0,0)}$ where $u_{(0,0)}$ is the solution of
the problem (\ref{P}) for $g=0$ and $q=0$. We define, in the way similar 
to Gariboldi and Tarzia (2003), Gariboldi and Tarzia (2008) and
Lions (1968), the applications $\Pi:(H\times Q)\times (H\times
Q)\,{\rightarrow}\,{\mathbb R}$, and $L:H\times Q{\rightarrow
}{\mathbb R}$ as follows:
\[
\Pi((g,q),(h,\eta))=(C(g,q),C(h,\eta))_{H}+M_{1}(g,h)_{H}+M_{2}(q,\eta
)_{Q}
\]
\[
L((g,q))=(C(g,q),z_{d}-u_{(0,0)})_{H},\quad \forall \,(g,q),(h,\eta)
\in H\times Q.
\]

For each $(g,q)\in H\times Q$, we define the adjoint state
$p_{(g,q)}$ corresponding to the problem (\ref{P}), as the unique
solution of the mixed elliptic problem
\begin{equation}
-\Delta p=u-z_{d}\,\ \text{in }\Omega, \ \
\,\,\,\,\,\,p\big|_{\Gamma _{1}}=0,\,\,\,\,\,\,\,\,\frac{\partial
p}{\partial n}\big|_{\Gamma _{2}}=0 \label{Padjunto}
\end{equation}
whose variational formulation is given by
\begin{equation}
a(p_{(g,q)},v)=(u_{(g,q)}-z_{d},v)_{H},\text{ }\forall v\in
V_{0},\text{ }p_{(g,q)}\in V_{0}. \label{Padjuntovariac}
\end{equation}
and we have the following properties.

\begin{thm}
a) $J$ is a coercive and strictly convex functional on $H\times Q$.

b) The adjoint state $p_{(g,q)}$ satisfy, $\forall (h,\eta)\in H\times Q$:
\[
a(p_{(g,q)},C(h,\eta))=(C(h,\eta),u_{(g,q)}-z_{d})_{H}=(h,p_{(g,q)})_{H}-(\eta,p_{(g,q)})_{Q}
\]

c) $J$ is G\^{a}teaux differentiable and $J'$ is given by, $\forall
(h,\eta)\in H\times Q$:
\begin{equation}
J'(g,q)(h-g,\eta-q)=\Pi((g,q),(h-g,\eta-q))-L(h-g,\eta-q)\label{jotaprima}
\end{equation}

d) There exists a unique solution
$(\overline{\overline{g}},\overline{\overline{q}})\in H\times
U_{ad}$ of the vectorial optimal control problem (\ref{PControl})
and its optimality condition is given by, $\forall (h,\eta)\in
H\times U_{ad}$:
\[
(h-\overline{\overline{g}},p_{(\overline{\overline{g}},\overline{\overline{q}})}+M_{1}\overline{\overline{g}})_{H}+(\eta-\overline{\overline{q}},M_{2}\overline{\overline{q}}-p_{(\overline{\overline{g}},\overline{\overline{q}})})_{Q}\geq
0.
\]
\end{thm}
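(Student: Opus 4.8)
The plan is to exploit the fact that, since the source term $L_{(g,q)}$ depends linearly on $(g,q)$ and $a$ is coercive on $V_{0}$, the state map is affine. More precisely, $C(g,q)=u_{(g,q)}-u_{(0,0)}$ is the unique element of $V_{0}$ satisfying $a(C(g,q),v)=(g,v)_{H}-(q,v)_{Q}$ for all $v\in V_{0}$, so $C$ is a bounded \emph{linear} map $H\times Q\to V_{0}$. Writing $u_{(g,q)}-z_{d}=C(g,q)-(z_{d}-u_{(0,0)})$ and expanding the square in \eqref{J}, I would obtain
\[
J(g,q)=\tfrac{1}{2}\,\Pi\big((g,q),(g,q)\big)-L\big((g,q)\big)+\tfrac{1}{2}\|z_{d}-u_{(0,0)}\|_{H}^{2},
\]
which exhibits $J$ as a quadratic functional with symmetric bilinear part $\Pi$ and bounded linear part $-L$. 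All four assertions then reduce to abstract facts about such functionals on the Hilbert space $H\times Q$.

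For part (a), coercivity and strict convexity both follow from positive definiteness of $\Pi$. Since $\Pi\big((g,q),(g,q)\big)=\|C(g,q)\|_{H}^{2}+M_{1}\|g\|_{H}^{2}+M_{2}\|q\|_{Q}^{2}\ge \min(M_{1},M_{2})\big(\|g\|_{H}^{2}+\|q\|_{Q}^{2}\big)$ with $M_{1},M_{2}>0$, the form $\Pi$ is coercive; this yields $J(g,q)\ge \tfrac{1}{2}\min(M_{1},M_{2})\,\|(g,q)\|_{H\times Q}^{2}-\|L\|\,\|(g,q)\|_{H\times Q}+\mathrm{const}\to+\infty$, hence coercivity, and positive definiteness of the symmetric form $\Pi$ gives strict convexity.

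Part (b) is a direct test-function computation: taking $v=C(h,\eta)\in V_{0}$ in the adjoint equation \eqref{Padjuntovariac} gives the first equality, while taking $v=p_{(g,q)}\in V_{0}$ in the identity $a(C(h,\eta),v)=(h,v)_{H}-(\eta,v)_{Q}$ and invoking the symmetry of $a$ gives the second. For part (c), I would differentiate $t\mapsto J\big((g,q)+t(h,\eta)\big)$ at $t=0$; the quadratic structure together with symmetry of $\Pi$ produces $J'(g,q)(h,\eta)=\Pi\big((g,q),(h,\eta)\big)-L\big((h,\eta)\big)$, and the stated formula follows by evaluating in the direction $(h-g,\eta-q)$. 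Combining this with part (b) rewrites the derivative in terms of the adjoint state as $J'(g,q)(h,\eta)=(h,p_{(g,q)}+M_{1}g)_{H}+(\eta,M_{2}q-p_{(g,q)})_{Q}$, which is what feeds into (d).

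Finally, for part (d), I would note that $H\times U_{ad}$ is a nonempty closed convex subset of $H\times Q$, since $U_{ad}$ is a closed convex cone, so the coercive, continuous, strictly convex functional $J$ attains its minimum there at a unique point, by the standard direct-method/projection argument for quadratic functionals (Lions (1968)). The minimizer is characterized by the first-order variational inequality $J'(\overline{\overline{g}},\overline{\overline{q}})(h-\overline{\overline{g}},\eta-\overline{\overline{q}})\ge 0$ for all $(h,\eta)\in H\times U_{ad}$; substituting the adjoint-state form of $J'$ obtained from (c) and (b) yields exactly the stated optimality condition. The only genuine subtlety is the first step, namely verifying linearity and boundedness of $C$ and completing the square correctly, after which each assertion is essentially bookkeeping within the classical Hilbert-space theory of quadratic optimal control.
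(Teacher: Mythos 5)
Your proposal is correct and follows essentially the same route as the paper: both arguments rest on the affinity of the state map $(g,q)\mapsto u_{(g,q)}$ (equivalently, linearity of $C$), which the paper exploits through the explicit convexity-defect identity bounding $(1-t)J(g_2,q_2)+tJ(g_1,q_1)-J(\cdot)$ below by $\frac{Mt(1-t)}{2}\|(g_2-g_1,q_2-q_1)\|^2_{H\times Q}$, while you equivalently complete the square to exhibit $J$ as $\frac{1}{2}\Pi-L+\mathrm{const}$ with $\Pi$ positive definite. The remaining steps (the test-function computation for (b), differentiation of $t\mapsto J((g,q)+t(h,\eta))$ for (c), and the standard existence/uniqueness and variational-inequality characterization for (d)) coincide with what the paper sketches via its references to Lions (1968).
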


\begin{proof}
(a) It is sufficient to prove that, $\forall
(g_{2},q_{2}),(g_{1},q_{1})\in H\times Q$ and $\forall t\in[0,1]$,
we have, see Lions (1968), Boukrouche and Tarzia (2007) or
Tr\"{o}ltzsch (2010):
\begin{equation}
u_{((1-t)g_{2}+tg_{1},(1-t)q_{2}+tq_{1})}=(1-t)u_{(g_{2},q_{2})}+tu_{(g_{1},q_{1})}\label{convex}
\end{equation}
and
\[
(1-t)J(g_{2},q_{2})+tJ(g_{1},q_{1})-J((1-t)(g_{2},q_{2})+t(g_{1},q_{1}))=
\]
\begin{equation}
=\frac{t(1-t)}{2}\left[\|u_{(g_{2},q_{2})}-u_{(g_{1},q_{1})}\|^{2}_{H}+
M_{1}\|g_{2}-g_{1}\|^{2}_{H}+M_{2}\|q_{2}-q_{1}\|^{2}_{Q}\right]\geq\nonumber
\end{equation}
\begin{equation}
\geq\frac{Mt(1-t)}{2}\|(g_{2}-g_{1},q_{2}-q_{1})\|^{2}_{H\times
Q}. \label{EstimJ}
\end{equation}
and
\[
a(p_{(g,q)},C(h,\eta))=(-\Delta
p_{(g,q)},u_{(g,q)}-u_{(0,0)})_{H}=
\]
\begin{equation}
=(h,p_{(g,q)})_{H}-(\eta,p_{(g,q)})_{Q}\label{a}
\end{equation}
where
\[
\|(g,q)\|^{2}_{H\times Q}=\|g\|^{2}_{H}+\|q\|^{2}_{Q},\quad M=Min(M_{1},M_{2})>0.
\]
\end{proof}

\subsection{Problem $P_{\alpha}$ and its Optimal Control Problem}

Let $C_{\alpha}:H\times Q{\rightarrow}V$ be the application defined
by $C_{\alpha}(g,q)=u_{(\alpha,g,q)}-u_{(\alpha,0,0)}$ where
$u_{(\alpha,0,0)}$ is the solution of the problem (\ref{Palfa}) for
$g=0$ and $q=0$. We define the applications $\Pi_{\alpha}:(H\times
Q)\times (H\times Q){\rightarrow}{\mathbb R}$ and
$L_{\alpha}:H\times Q{\rightarrow}{\mathbb R}$ by the following
expressions, $\forall \,\,(g,q),(h,\eta) \in H\times Q$:
\[
\Pi_{\alpha}((g,q),(h,\eta))=(C_{\alpha}(g,q),C_{\alpha}(h,\eta))_{H}+M_{1}(g,h)_{H}+M_{2}(q,\eta
)_{Q},
\]
\[
L_{\alpha}(q)=(C_{\alpha}(g,q),z_{d}-u_{(\alpha,0,0)})_{H}.
\]

For each $(g,q)\in H\times Q$ and $\alpha >0$, we define the adjoint
state $p_{(\alpha,g,q)}$ corresponding to the problem (\ref{Palfa}),
as the unique solution of the mixed elliptic problem
\begin{equation}
-\Delta p=u-z_{d}\,\ \text{in }\Omega, \ \
\,\,\,\,\,\,-\frac{\partial p}{\partial n}\big|_{\Gamma _{1}}=\alpha
p,\,\,\,\,\,\,\,\,\frac{\partial p}{\partial n}\big|_{\Gamma _{2}}=0
\label{Padjuntoalfa}
\end{equation}
whose variational formulation is given by
\begin{equation}
a_{\alpha}(p_{(\alpha,g,q)},v)=(u_{(\alpha,g,q)}-z_{d},v)_{H},\text{
}\forall v\in V,\text{ }p_{(\alpha,g,q)}\in V.
\label{Padjuntovariacalfa}
\end{equation}

We can obtain similar properties to Theorem 2.1, following
Boukrouche and Tarzia \linebreak (2007), Kinderlehrer and
Stampacchia (1980), Lions (1968) or Tr\"{o}ltzsch (2010).

\begin{thm} We have, for each $\alpha >0$, the following
properties:

a) $J_{\alpha}$ is a coercive and strictly convex functional on $H\times Q$.

b) The adjoint state $p_{(\alpha,g,q)}$ satisfy, $\forall (h,\eta)\in H\times Q$:
\[
a_{\alpha}(p_{(\alpha,g,q)},C_{\alpha}(h,\eta))=(C_{\alpha}(h,\eta),u_{(\alpha,g,q)}-z_{d})_{H}=(h,p_{(\alpha,g,q)})_{H}-(\eta,p_{(\alpha,g,q)})_{Q}.
\]

c) $J_{\alpha}$ is G\^{a}teaux differentiable and $J'_{\alpha}$ is
given by, $\forall (h,\eta)\in H\times Q$:
\begin{equation}
J'_{\alpha}(g,q)(h-g,\eta-q)=\Pi_{\alpha}((g,q),(h-g,\eta-q))-L_{\alpha}(h-g,\eta-q)
\end{equation}

d) There exists a unique solution
$(\overline{\overline{g}}_{\alpha},\overline{\overline{q}}_{\alpha})\in
H\times U_{ad}$ of the vectorial optimal control pro\-blem
(\ref{PControlalfa}) and its optimality condition is given by,
$\forall (h,\eta)\in H\times U_{ad}$:
\[
(h-\overline{\overline{g}}_{\alpha},p_{(\alpha,\overline{\overline{g}}_{\alpha},\overline{\overline{q}}_{\alpha})}+M_{1}\overline{\overline{g}}_{\alpha})_{H}+(\eta-\overline{\overline{q}}_{\alpha},M_{2}\overline{\overline{q}}_{\alpha}-p_{(\alpha,\overline{\overline{g}}_{\alpha},
\overline{\overline{q}}_{\alpha})})_{Q}\geq 0.
\]
\end{thm}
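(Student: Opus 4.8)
The plan closely mirrors the structure already established for Theorem 2.1, since the problem $P_\alpha$ is the coercive analogue of $P$ on the full space $V$ rather than on the affine set $K$. First I would establish the affine dependence of the state on the control, namely that
\begin{equation}
u_{(\alpha,(1-t)g_2+tg_1,(1-t)q_2+tq_1)}=(1-t)u_{(\alpha,g_2,q_2)}+tu_{(\alpha,g_1,q_1)},\nonumber
\end{equation}
which follows because $L_{(\alpha,g,q)}$ is affine in $(g,q)$ while the $\alpha$-dependent part $\alpha(b,v)_R$ is fixed; by the linearity and unique solvability of the variational equality \eqref{Palfavariacional}, the map $(g,q)\mapsto u_{(\alpha,g,q)}$ is affine, so $C_\alpha$ is linear. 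Substituting this identity into $J_\alpha$ and expanding the square gives the convexity identity analogous to \eqref{EstimJ}, with the same positive lower bound $\tfrac{M}{2}$ with $M=\min(M_1,M_2)$; the quadratic penalty terms alone already furnish coercivity and strict convexity on $H\times Q$, proving part (a).

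For part (b) I would test the adjoint variational equality \eqref{Padjuntovariacalfa} against $v=C_\alpha(h,\eta)\in V$ (legitimate since $p_{(\alpha,g,q)}\in V$ and the test space for $P_\alpha$ is the full $V$, unlike the $V_0$ restriction in $P$), yielding
\begin{equation}
a_\alpha(p_{(\alpha,g,q)},C_\alpha(h,\eta))=(C_\alpha(h,\eta),u_{(\alpha,g,q)}-z_d)_H.\nonumber
\end{equation}
To convert the right-hand side into the $(h,\eta)$ pairing I would instead read $C_\alpha(h,\eta)$ as the solution of the \emph{homogeneous-data} state equation driven by $(h,\eta)$, i.e. $a_\alpha(C_\alpha(h,\eta),v)=L_{(h,\eta)}(v)=(h,v)_H-(\eta,v)_Q$ for all $v\in V$; taking $v=p_{(\alpha,g,q)}$ and using symmetry of $a_\alpha$ gives $(h,p_{(\alpha,g,q)})_H-(\eta,p_{(\alpha,g,q)})_Q$, closing the chain of equalities.

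Part (c) is a direct computation of the directional derivative of the quadratic functional $J_\alpha$. Writing $J_\alpha(g,q)=\tfrac12\|C_\alpha(g,q)+u_{(\alpha,0,0)}-z_d\|_H^2+\tfrac{M_1}{2}\|g\|_H^2+\tfrac{M_2}{2}\|q\|_Q^2$ and differentiating along $(h-g,\eta-q)$, the linearity of $C_\alpha$ produces exactly $\Pi_\alpha((g,q),(h-g,\eta-q))-L_\alpha(h-g,\eta-q)$, so $J_\alpha$ is G\^ateaux differentiable with the stated derivative. For part (d), $H\times U_{ad}$ is a nonempty closed convex subset of the Hilbert space $H\times Q$ (convexity and closedness of $U_{ad}=\{q\ge0\}$ are immediate), and $J_\alpha$ is continuous, strictly convex and coercive by part (a); the standard existence-uniqueness theorem for minimization of such functionals over closed convex sets (Lions 1968 or Tr\"oltzsch 2010) applies. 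The optimality condition is the first-order variational inequality $J_\alpha'(\overline{\overline{g}}_\alpha,\overline{\overline{q}}_\alpha)(h-\overline{\overline{g}}_\alpha,\eta-\overline{\overline{q}}_\alpha)\ge0$ for all $(h,\eta)\in H\times U_{ad}$; substituting the expression for $J_\alpha'$ from part (c) and then using part (b) to rewrite the $\Pi_\alpha$ term $(C_\alpha(\cdot),u_{(\alpha,\cdot)}-z_d)_H$ in terms of the adjoint state $p_{(\alpha,\overline{\overline{g}}_\alpha,\overline{\overline{q}}_\alpha)}$ yields the displayed inequality. The only genuinely delicate point is part (b): one must be careful that the dual pairing identity uses the \emph{homogeneous} characterization of $C_\alpha$ and exploits that the Robin bilinear form $a_\alpha$ is symmetric and coercive on all of $V$, so no boundary residual survives on $\Gamma_1$ — precisely the feature that distinguishes $P_\alpha$ from the Dirichlet problem $P$.
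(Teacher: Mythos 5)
Your proposal is correct and follows essentially the same route the paper intends: the paper omits the proof of this theorem, referring instead to the argument for Theorem 2.1, and your steps (affine dependence of $u_{(\alpha,g,q)}$ on $(g,q)$, the quadratic expansion giving strict convexity and coercivity with constant $M=\min(M_1,M_2)$, the adjoint pairing via the homogeneous characterization $a_\alpha(C_\alpha(h,\eta),v)=(h,v)_H-(\eta,v)_Q$ and symmetry of $a_\alpha$, and the standard minimization theorem on the closed convex set $H\times U_{ad}$) are exactly the $P_\alpha$ analogues of that sketch. The verification in part (b) that the $\alpha(b,v)_R$ term cancels when passing from $u_{(\alpha,h,\eta)}$ to $C_\alpha(h,\eta)$ is the one detail worth stating explicitly, and you handle it correctly.
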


\section{\large Estimations}

\subsection{Estimations with respect to the problem $P$}

We consider the scalar distributed optimal control problem:
\begin{equation}
\text{Find }\quad\overline{g}\in H\quad\text{ such that }\quad
J_{1}(\overline{g})=\min\limits_{g\in H} \text{ }J_{1}(g),
\,\,\,\text{ for fixed } q\in Q,\label{PControlJ1}
\end{equation}
and the scalar Neumann boundary optimal control problem:
\begin{equation}
\text{Find }\quad \overline{q}\in U_{ad}\quad\text{ such that }\quad
J_{2}(\overline{q})=\min\limits_{q\in U_{ad}}\,J_{2}(q),
\,\,\,\text{ for fixed } g\in H, \label{PControlJ2}
\end{equation}
where $J_{1}$ is the cost functional given in Gariboldi and Tarzia (2003) plus
the constant $\frac{M_{2}}{2}\left\| q\right\|_{Q}^{2}$ and
$J_{2}$ is the functional given in Gariboldi and Tarzia (2008) plus the constant
$\frac{M_{1}}{2}\left\| g\right\|_{H}^{2}$, that is,
$J_{1}:H{\rightarrow}{\mathbb R}_{0}^{+}$ and
$J_{2}:Q{\rightarrow}{\mathbb R}_{0}^{+}$, are given by:
\begin{equation}
J_{1}(g)=\frac{1}{2}\left\|u_{g}-z_{d}\right\|
_{H}^{2}+\frac{M_{1}}{2}\left\|
g\right\|_{H}^{2}+\frac{M_{2}}{2}\left\| q\right\|_{Q}^{2},\quad
(\text{ fixed }q\in Q)\label{J1}
\end{equation}
\begin{equation}
J_{2}(q)=\frac{1}{2}\left\| u_{q}-z_{d}\right\|
_{H}^{2}+\frac{M_{2}}{2}\left\|
q\right\|_{Q}^{2}+\frac{M_{1}}{2}\left\| g\right\|_{H}^{2},\quad
(\text{ fixed } g\in H)\label{J2}
\end{equation}
where $u_{g}$ and $u_{q}$ are the unique solutions of the problem
(\ref{P}) for fixed $q$ and $g$ data, respectively.

\begin{rem} The functionals $J$, $J_{1}$ and $J_{2}$ satisfy the elemental estimations
\begin{equation}
J(\overline{\overline{g}},\overline{\overline{q}})\leq
J_{1}(\overline{g}),\,\, \forall q\in Q \quad \text{and}\quad
J(\overline{\overline{g}},\overline{\overline{q}})\leq
J_{2}(\overline{q}),\,\, \forall g\in H.\nonumber
\end{equation}
\end{rem}

In the next theorem we will obtain estimations between the solution
of the scalar distributed optimal control problem (\ref{PControlJ1})
with the first component of the solution of the vectorial
distributed and Neumann boundary optimal control problem
(\ref{PControl}), and between the solution of the scalar Neumann
boundary optimal control problem (\ref{PControlJ2}) with the second
component of the solution of the vectorial distributed and Neumann
boundary optimal control problem (\ref{PControl}).

\begin{thm} If $(\overline{\overline{g}},\overline{\overline{q}})\in H\times U_{ad}$ is
the unique solution of the vectorial optimal control problem
(\ref{PControl}), and $\overline{g}$  and $\overline{q}$ are the
unique solutions of the scalar optimal control problems
(\ref{PControlJ1}) and (\ref{PControlJ2}) respectively, then:
\begin{equation}
\|\overline{q}-\overline{\overline{q}}\|_{Q}\leq
\frac{\|\gamma_{0}\|}{\lambda
M_{2}}\|u_{(\overline{\overline{g}},\overline{\overline{q}})}-u_{(\overline{\overline{g}},\overline{q})}\|_{H}\label{estim-q}
\end{equation}
\begin{equation}
\|\overline{g}-\overline{\overline{g}}\|_{H}\leq \frac{1}{\lambda
M_{1}}\|u_{(\overline{\overline{g}},\overline{\overline{q}})}-u_{(\overline{g},\overline{\overline{q}})}\|_{H},\label{estim-g}
\end{equation}
where $\gamma_{0}$ is the trace operator.
\end{thm}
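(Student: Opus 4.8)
The plan is to exploit the optimality (variational) inequalities characterizing each of the three optimal controls, and to subtract them strategically so that the quadratic regularization terms $M_1\|\cdot\|_H^2$ and $M_2\|\cdot\|_Q^2$ produce the coercivity needed to bound $\|\overline{q}-\overline{\overline{q}}\|_Q$ and $\|\overline{g}-\overline{\overline{g}}\|_H$ from above. Concretely, I would first write down the optimality condition for the scalar problem \eqref{PControlJ2} (with $g=\overline{\overline{g}}$ fixed), which reads
\[
(\eta-\overline{q},\,M_2\overline{q}-p_{(\overline{\overline{g}},\overline{q})})_Q\ge 0,\quad\forall\,\eta\in U_{ad},
\]
and alongside it the second (boundary) component of the vectorial optimality condition from Theorem~2.1(d),
\[
(\eta-\overline{\overline{q}},\,M_2\overline{\overline{q}}-p_{(\overline{\overline{g}},\overline{\overline{q}})})_Q\ge 0,\quad\forall\,\eta\in U_{ad}.
\]
Both involve the same fixed first component $\overline{\overline{g}}$, which is the key to the comparison: the only difference between the two adjoint states is the boundary control. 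I would then test the first inequality with $\eta=\overline{\overline{q}}\in U_{ad}$ and the second with $\eta=\overline{q}\in U_{ad}$, and add them. The diagonal terms combine to $-M_2\|\overline{q}-\overline{\overline{q}}\|_Q^2$, yielding
\[
M_2\|\overline{q}-\overline{\overline{q}}\|_Q^2\le (\overline{q}-\overline{\overline{q}},\,p_{(\overline{\overline{g}},\overline{q})}-p_{(\overline{\overline{g}},\overline{\overline{q}})})_Q.
\]

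The next step is to control the adjoint-state difference on the right. Both $p_{(\overline{\overline{g}},\overline{q})}$ and $p_{(\overline{\overline{g}},\overline{\overline{q}})}$ solve the adjoint variational equality \eqref{Padjuntovariac} with the same bilinear form $a$ but right-hand sides driven by $u_{(\overline{\overline{g}},\overline{q})}-z_d$ and $u_{(\overline{\overline{g}},\overline{\overline{q}})}-z_d$ respectively; subtracting gives that $p_{(\overline{\overline{g}},\overline{q})}-p_{(\overline{\overline{g}},\overline{\overline{q}})}$ solves an adjoint problem with source $u_{(\overline{\overline{g}},\overline{q})}-u_{(\overline{\overline{g}},\overline{\overline{q}})}$. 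Using coercivity of $a$ on $V_0$ with constant $\lambda$, I would derive the $H^1$-bound
\[
\|p_{(\overline{\overline{g}},\overline{q})}-p_{(\overline{\overline{g}},\overline{\overline{q}})}\|_{V_0}\le \tfrac{1}{\lambda}\|u_{(\overline{\overline{g}},\overline{q})}-u_{(\overline{\overline{g}},\overline{\overline{q}})}\|_H.
\]
Then the trace operator $\gamma_0$ converts the $Q$-inner product on $\Gamma_2$ into a $V_0$-norm estimate: applying Cauchy--Schwarz in $Q$ followed by the trace inequality $\|v\|_Q\le\|\gamma_0\|\,\|v\|_{V_0}$ to the adjoint difference, and dividing by $M_2\|\overline{q}-\overline{\overline{q}}\|_Q$, delivers \eqref{estim-q}.

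The estimate \eqref{estim-g} is entirely symmetric: I would compare the scalar distributed optimality condition for \eqref{PControlJ1} (with $q=\overline{\overline{q}}$ fixed), namely $(h-\overline{g},\,p_{(\overline{g},\overline{\overline{q}})}+M_1\overline{g})_H\ge 0$ for all $h\in H$, against the first (distributed) component of the vectorial condition $(h-\overline{\overline{g}},\,p_{(\overline{\overline{g}},\overline{\overline{q}})}+M_1\overline{\overline{g}})_H\ge 0$. Since the distributed control ranges over all of $H$ (no sign constraint), I can test with $h=\overline{\overline{g}}$ and $h=\overline{g}$ freely, obtaining $M_1\|\overline{g}-\overline{\overline{g}}\|_H^2\le(\overline{g}-\overline{\overline{g}},\,p_{(\overline{\overline{g}},\overline{\overline{q}})}-p_{(\overline{g},\overline{\overline{q}})})_H$, and then a direct $H$-inner-product Cauchy--Schwarz bound on the adjoint difference (no trace needed here, since the distributed control lives in $\Omega$) yields \eqref{estim-g}. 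The main subtlety to handle carefully is the bookkeeping of which controls are held fixed in each problem so that subtracting the two optimality conditions cancels the adjoint-state dependence on the \emph{matching} component; once the source terms of the subtracted adjoint problems are identified as the system-state differences appearing on the right-hand sides of \eqref{estim-q}--\eqref{estim-g}, the coercivity and trace estimates are routine. The factor $1/\lambda$ and the trace norm $\|\gamma_0\|$ are precisely the constants from these two steps.
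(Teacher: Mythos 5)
Your proposal is correct and follows essentially the same route as the paper: cross-testing the scalar optimality inequality (with $g=\overline{\overline{g}}$ fixed) against the vectorial one restricted to its boundary component, adding to extract $M_2\|\overline{q}-\overline{\overline{q}}\|_Q^2$, and then bounding the adjoint-state difference by $\tfrac{1}{\lambda}$ times the state difference and inserting the trace norm $\|\gamma_0\|$ (with the symmetric, trace-free argument for $\overline{g}$). No gaps; this matches the paper's proof step for step.
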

\begin{proof}
For $g=\overline{\overline{g}}$, from the optimality condition for
$\overline{q}$ , see Gariboldi and Tarzia (2008), we have
\begin{equation}
(M_{2}\overline{q}-p_{(\overline{\overline{g}},\overline{q})},\eta
-\overline{q})_{Q}\geq 0, \quad \forall \eta \in U_{ad}.\label{a}
\end{equation}
If we take $h=\overline{\overline{g}}\in H$ in the optimality
condition for $(\overline{\overline{g}},\overline{\overline{q}})$,
we obtain
\begin{equation}
(M_{2}\overline{\overline{q}}-p_{(\overline{\overline{g}},\overline{\overline{q}})},\eta
-\overline{\overline{q}})_{Q}\geq 0, \quad \forall \eta \in U_{ad}.
\label{b}
\end{equation}
Now, taking $\eta =\overline{\overline{q}}\in U_{ad}$ in (\ref{a})
and $\eta =\overline{q}\in U_{ad}$ in (\ref{b}), we obtain
\begin{equation}
(M_{2}(\overline{q}
-\overline{\overline{q}})+(p_{(\overline{\overline{g}},\overline{\overline{q}})}-p_{(\overline{\overline{g}},\overline{q})}),
\overline{\overline{q}}-\overline{q})_{Q}\geq 0,\nonumber
\end{equation}
and by using
$\|p_{(\overline{\overline{g}},\overline{\overline{q}})}-p_{(\overline{\overline{g}},\overline{q})}\|_{V}
\leq \frac{1}{\lambda}
\|u_{(\overline{\overline{g}},\overline{\overline{q}})}-u_{(\overline{\overline{g}},\overline{q})}\|_{H}$
we deduce
\[
\|\overline{q}-\overline{\overline{q}}\|_{Q}\leq
\frac{\|\gamma_{0}\|}{M_{2}}\|p_{(\overline{\overline{g}},\overline{\overline{q}})}-p_{(\overline{\overline{g}},\overline{q})}\|_{V}
\leq \frac{\|\gamma_{0}\|}{\lambda
M_{2}}\|u_{(\overline{\overline{g}},\overline{\overline{q}})}-u_{(\overline{\overline{g}},\overline{q})}\|_{H}
\]
therefore the estimation (\ref{estim-q}) holds. Similarly, the estimation (\ref{estim-g}) holds.
\end{proof}

When we consider the vectorial distributed and Neumann boundary
optimal control problem (\ref{PControl}) without restrictions, i.e.
$U_{ad}=Q$, then we can characterize the solution of
(\ref{PControl}) by using the fixed point theory.

Let $W:H\times Q\rightarrow H\times Q$ be the operator defined by
\begin{equation}
W(g,q)=(-\frac{1}{M_{1}}p_{(g,q)},\frac{1}{M_{2}}p_{(g,q)}).\label{Woperador}
\end{equation}

\begin{thm}
There exists a positive constant
$C_{0}=C_{0}(\lambda,\gamma_{0},M_{1},M_{2})$ such that, \linebreak
$\forall (g_{1},q_{1}), (g_{2},q_{2})\in H\times Q$:
\begin{equation}
\|W(g_{2},q_{2})-W(g_{1},q_{1})\|_{H\times Q}\leq
C_{0}\|(g_{2},q_{2})-(g_{1},q_{1})\|_{H\times Q}\label{contraction}
\end{equation}
and $W$ is a contraction operator if and only if data satisfy the
following condition:
\begin{equation}
C_{0}=\frac{\sqrt{2}}{\lambda^{2}}\sqrt{\frac{1}{M_{1}^{2}}+\frac{\|\gamma_{0}\|^{2}}
{M_{2}^{2}}}(1+\|\gamma_{0}\|)<1.\label{Contrac}
\end{equation}
\end{thm}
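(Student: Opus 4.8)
The plan is to prove the Lipschitz estimate (\ref{contraction}) by chaining together three elementary bounds, and then to read off the contraction criterion (\ref{Contrac}) as the condition $C_0<1$. Throughout I abbreviate $\delta p = p_{(g_2,q_2)}-p_{(g_1,q_1)}$ and $\delta u = u_{(g_2,q_2)}-u_{(g_1,q_1)}$, and I repeatedly use that $\|v\|_H\leq\|v\|_V$ for $v\in V$ and that $\|v\|_Q=\|\gamma_0 v\|_{L^2(\Gamma_2)}\leq\|\gamma_0\|\,\|v\|_V$, where $\gamma_0$ is the trace operator. The whole problem reduces to controlling $\|\delta p\|_V$ by $\|(g_2,q_2)-(g_1,q_1)\|_{H\times Q}$, because directly from the definition (\ref{Woperador}),
\[
\|W(g_2,q_2)-W(g_1,q_1)\|_{H\times Q}^2 = \frac{1}{M_1^2}\|\delta p\|_H^2 + \frac{1}{M_2^2}\|\delta p\|_Q^2 \leq \Big(\frac{1}{M_1^2}+\frac{\|\gamma_0\|^2}{M_2^2}\Big)\|\delta p\|_V^2 .
\]

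The second step uses the adjoint variational equality (\ref{Padjuntovariac}): subtracting the two instances, the term $z_d$ cancels, so $\delta p\in V_0$ solves $a(\delta p,v)=(\delta u,v)_H$ for all $v\in V_0$. Testing with $v=\delta p$ and invoking coercivity of $a$ on $V_0$ with constant $\lambda$ gives $\lambda\|\delta p\|_V^2\leq\|\delta u\|_H\|\delta p\|_V$, i.e. $\|\delta p\|_V\leq\frac{1}{\lambda}\|\delta u\|_H$ (the same bound already exploited in the proof of the preceding estimations theorem). The third step treats $\delta u$ analogously via the state equation (\ref{Pvariacional}): since $u_{(g_2,q_2)},u_{(g_1,q_1)}\in K=v_0+V_0$ their difference lies in $V_0$ and solves $a(\delta u,v)=(g_2-g_1,v)_H-(q_2-q_1,v)_Q$ for all $v\in V_0$; testing with $\delta u$, applying Cauchy--Schwarz together with the trace bound, and dividing by $\|\delta u\|_V$ yields $\|\delta u\|_V\leq\frac{1}{\lambda}\big(\|g_2-g_1\|_H+\|\gamma_0\|\,\|q_2-q_1\|_Q\big)$.

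Assembling the three bounds and using $\|\delta u\|_H\leq\|\delta u\|_V$ produces
\[
\|W(g_2,q_2)-W(g_1,q_1)\|_{H\times Q} \leq \frac{1}{\lambda^2}\sqrt{\frac{1}{M_1^2}+\frac{\|\gamma_0\|^2}{M_2^2}}\,\big(\|g_2-g_1\|_H + \|\gamma_0\|\,\|q_2-q_1\|_Q\big).
\]
To reach the stated $C_0$ I factor $(1+\|\gamma_0\|)$ out of the last bracket, using $1\leq 1+\|\gamma_0\|$ and $\|\gamma_0\|\leq 1+\|\gamma_0\|$, and then apply the elementary inequality $a+b\leq\sqrt{2}\,\sqrt{a^2+b^2}$ with $a=\|g_2-g_1\|_H$, $b=\|q_2-q_1\|_Q$, which replaces $\|g_2-g_1\|_H+\|q_2-q_1\|_Q$ by $\sqrt{2}\,\|(g_2,q_2)-(g_1,q_1)\|_{H\times Q}$. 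This is precisely where the factor $\sqrt{2}(1+\|\gamma_0\|)$ of (\ref{Contrac}) comes from, and it delivers (\ref{contraction}) with $C_0=\frac{\sqrt{2}}{\lambda^2}\sqrt{\frac{1}{M_1^2}+\frac{\|\gamma_0\|^2}{M_2^2}}(1+\|\gamma_0\|)$. Since (\ref{contraction}) shows $W$ is globally Lipschitz with constant $C_0$, it is a contraction as soon as $C_0<1$, which is exactly condition (\ref{Contrac}).

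The computations are all routine; no genuine obstacle is expected. The only points requiring care are the bookkeeping of the trace operator, which enters both the $Q$-component of $W$ and the boundary term of the state equation, and the deliberately lossy combination producing the factor $\sqrt{2}(1+\|\gamma_0\|)$ rather than the sharper $\sqrt{1+\|\gamma_0\|^2}$. The most delicate wording is the ``if and only if'': the estimate only furnishes $C_0$ as an upper bound for the Lipschitz constant, so the substantive content is the sufficient direction, namely that $C_0<1$ guarantees $W$ is a contraction and hence, by Banach's fixed point theorem, admits a unique fixed point which characterizes the optimal control $(\overline{\overline{g}},\overline{\overline{q}})$ in the unconstrained case $U_{ad}=Q$.
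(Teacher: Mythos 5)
Your proof is correct and follows essentially the same route as the paper: the paper likewise chains the state estimate $\|\delta u\|_{V}\leq\frac{1}{\lambda}(\|g_{2}-g_{1}\|_{H}+\|\gamma_{0}\|\,\|q_{2}-q_{1}\|_{Q})$ with the adjoint estimate $\|\delta p\|_{V}\leq\frac{1}{\lambda}\|\delta u\|_{H}$ and the definition of $W$ to get the squared bound, then extracts the same constant $C_{0}$. You merely supply more detail (deriving the two auxiliary estimates by testing the variational equalities, and making explicit the lossy step producing the factor $\sqrt{2}\,(1+\|\gamma_{0}\|)$), and your caveat that the ``if and only if'' is really only the sufficient direction is a fair observation that applies equally to the paper's own statement.
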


\begin{proof}
By using the estimations, $\forall (g_{1},q_{1}),(g_{2},q_{2}) \in H\times Q$:
\begin{equation}
\|u_{(g_{1},q_{1})}-u_{(g_{2},q_{2})}\|_{V}\leq \frac{1}{\lambda}
(\|g_{2}-g_{1}\|_{H}+\|\gamma_{0}\|\|q_{2}-q_{1}\|_{Q}),
\label{estim-a}
\end{equation}
\begin{equation}
\|p_{(g_{1},q_{1})}-p_{(g_{2},q_{2})}\|_{V}\leq \frac{1}{\lambda}
\|u_{(g_{1},q_{1})}-u_{(g_{2},q_{2})}\|_{H}\label{estim-d}
\end{equation}
we obtain
\begin{equation}
\|W(g_{2},q_{2})-W(g_{1},q_{1})\|^{2}_{H\times Q}\leq
(\frac{1}{M_{1}^{2}}+\frac{\|\gamma_{0}\|^{2}}{M_{2}^{2}})\frac{1}{\lambda^{4}}[\|g_{2}-g_{1}\|_{H}+\|\gamma_{0}\|\|q_{2}-q_{1}\|_{Q}]^{2}.\nonumber
\end{equation}
Then (\ref{contraction}) holds and the operator $W$ is a contraction
if and only if data satisfy inequality (\ref{Contrac}).
\end{proof}

\begin{cor}
If data satisfy inequality (\ref{Contrac}) then the unique solution
$(\overline{\overline{g}},\overline{\overline{q}})\in H\times Q$ of
the vectorial optimal control problem (\ref{PControl}) can be
obtained as the unique fixed point of the operator $W$, that is:
\[
W(\overline{\overline{g}},\overline{\overline{q}})=(-\frac{1}{M_{1}}p_{(\overline{\overline{g}},\overline{\overline{q}})},\frac{1}{M_{2}}p_{(\overline{\overline{g}},\overline{\overline{q}})})=(\overline{\overline{g}},\overline{\overline{q}}).
\]
\end{cor}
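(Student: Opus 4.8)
The plan is to read the corollary as an immediate consequence of the contraction property in Theorem 3.3 together with the optimality condition of Theorem 2.1 (d) specialized to the unconstrained case $U_{ad}=Q$. First I would invoke Theorem 3.3: since the data satisfy (\ref{Contrac}), the operator $W$ defined in (\ref{Woperador}) is a contraction on $H\times Q$. As $H\times Q$, equipped with $\|\cdot\|_{H\times Q}$, is a product of two Hilbert spaces and hence a complete metric space, the Banach fixed point theorem yields that $W$ admits one and only one fixed point.

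It then remains to identify this fixed point with the optimal control $(\overline{\overline{g}},\overline{\overline{q}})$. Here the hypothesis $U_{ad}=Q$ is essential: because the admissible set is now the whole space, the test elements $(h-\overline{\overline{g}},\eta-\overline{\overline{q}})$ in the variational inequality of Theorem 2.1 (d) range over all of $H\times Q$, and replacing a test direction by its opposite is still admissible. This forces the inequality to hold as an equality,
\[
(h-\overline{\overline{g}},p_{(\overline{\overline{g}},\overline{\overline{q}})}+M_{1}\overline{\overline{g}})_{H}+(\eta-\overline{\overline{q}},M_{2}\overline{\overline{q}}-p_{(\overline{\overline{g}},\overline{\overline{q}})})_{Q}=0,\qquad\forall\,(h,\eta)\in H\times Q.
\]
Taking $\eta=\overline{\overline{q}}$ and letting $h$ run over $H$ gives $p_{(\overline{\overline{g}},\overline{\overline{q}})}+M_{1}\overline{\overline{g}}=0$ in $H$, i.e. $\overline{\overline{g}}=-\frac{1}{M_{1}}p_{(\overline{\overline{g}},\overline{\overline{q}})}$; taking $h=\overline{\overline{g}}$ and letting $\eta$ run over $Q$ gives $M_{2}\overline{\overline{q}}-p_{(\overline{\overline{g}},\overline{\overline{q}})}=0$ on $\Gamma_{2}$, i.e. $\overline{\overline{q}}=\frac{1}{M_{2}}p_{(\overline{\overline{g}},\overline{\overline{q}})}$. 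These are precisely the two components of the identity $W(\overline{\overline{g}},\overline{\overline{q}})=(\overline{\overline{g}},\overline{\overline{q}})$, so $(\overline{\overline{g}},\overline{\overline{q}})$ is a fixed point of $W$; by the uniqueness from the first step it is \emph{the} fixed point, which proves the corollary.

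The argument is short because Theorem 3.3 already carries the analytic weight, so I do not expect a genuine obstacle. The one point requiring care is purely notational: in the second slot of $W$ and in the $Q=L^{2}(\Gamma_{2})$ inner product above, $p_{(\overline{\overline{g}},\overline{\overline{q}})}$ must be understood as its trace $\gamma_{0}p_{(\overline{\overline{g}},\overline{\overline{q}})}$ on $\Gamma_{2}$, which is exactly why the trace norm $\|\gamma_{0}\|$ appears in the contraction constant $C_{0}$ of (\ref{Contrac}). Once this identification is kept consistent, the passage from the variational inequality on the full space to the pointwise Euler--Lagrange identities and then to the Banach fixed point conclusion is routine.
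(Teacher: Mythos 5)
Your argument is correct and is precisely the route the paper intends (the corollary is stated without proof there): Banach's fixed point theorem applied to the contraction $W$ from the preceding theorem, plus the observation that with $U_{ad}=Q$ the optimality condition of Theorem 2.1(d) becomes an equality yielding $M_{1}\overline{\overline{g}}=-p_{(\overline{\overline{g}},\overline{\overline{q}})}$ in $H$ and $M_{2}\overline{\overline{q}}=\gamma_{0}p_{(\overline{\overline{g}},\overline{\overline{q}})}$ on $\Gamma_{2}$, identifying the optimal control as the fixed point. Your remark about interpreting $p$ through its trace in the second component is also the right point of care; nothing is missing.
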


\subsection{Estimations with respect to the problem $P_{\alpha}$}

For each $\alpha >0$, we consider the scalar optimal control
problems:
\begin{equation}
\text{Find }\quad \overline{g}_{\alpha}\in H\quad\text{ such that
}\quad J_{1\alpha}(\overline{g}_{\alpha})=\min\limits_{g\in H}
\text{ }J_{1\alpha}(g),  \label{PControlalfaJ1}
\end{equation}
\begin{equation}
\text{Find }\quad \overline{q}_{\alpha}\in U_{ad}\quad\text{ such
that }\quad J_{2\alpha}(\overline{q}_{\alpha})=\min\limits_{q\in
U_{ad}}\,J_{2\alpha}(q),  \label{PControlalfaJ2}
\end{equation}
where $J_{1\alpha}:H{\rightarrow}{\mathbb R}_{0}^{+}$ and $J_{2\alpha
}:Q{\rightarrow}{\mathbb R}_{0}^{+}$, are given by:
\begin{equation}
J_{1\alpha }(g)=\frac{1}{2}\left\|u_{\alpha g}-z_{d}\right\|
_{H}^{2}+\frac{M_{1}}{2}\left\|
g\right\|_{H}^{2}+\frac{M_{2}}{2}\left\| q\right\|_{Q}^{2},\quad
(\text{ fixed }q\in Q)\label{J1}
\end{equation}
\begin{equation}
J_{2\alpha }(q)=\frac{1}{2}\left\| u_{\alpha q}-z_{d}\right\|
_{H}^{2}+\frac{M_{2}}{2}\left\|
q\right\|_{Q}^{2}+\frac{M_{1}}{2}\left\| g\right\|_{H}^{2},\quad
(\text{ fixed }g\in H)\label{J2}
\end{equation}
where $J_{1\alpha}$ is the functional studied in Gariboldi and Tarzia (2003) plus
the constant $\frac{M_{2}}{2}\left\| q\right\|_{Q}^{2}$,
$J_{2\alpha}$ is the functional studied in Gariboldi and Tarzia (2008) plus the
constant $\frac{M_{1}}{2}\left\| g\right\|_{H}^{2}$, and the
system states $u_{\alpha g}$ and $u_{\alpha q}$ are the unique
solutions of the problem (\ref{Palfa}) for fixed data $q$ and $g$,
respectively.

\begin{rem} The functionals $J_{\alpha}$, $J_{1\alpha
}$ and $J_{2\alpha }$ satisfy the estimations
\[
J_{\alpha}(\overline{\overline{g}}_{\alpha},\overline{\overline{q}}_{\alpha})\leq
J_{1\alpha }(\overline{g}_{\alpha}),\, \forall q\in Q\,\,
\text{and}\,\,
J_{\alpha}(\overline{\overline{g}}_{\alpha},\overline{\overline{q}}_{\alpha})\leq
J_{2\alpha }(\overline{q}_{\alpha}),\, \forall g\in H.
\]
\end{rem}

Estimations between the solution of the scalar distributed optimal
control problem (\ref{PControlalfaJ1}) with respect to the first
component of the solution of the vectorial distributed and Neumann
boundary optimal control problem (\ref{PControlalfa}), and
estimations between the solution of the scalar Neumann boundary
optimal control problem (\ref{PControlalfaJ2}) with respect to the
second component of the solution of the vectorial distributed and
Neumann boundary optimal control problem (\ref{PControlalfa}) are
given in the next theorem whose proof is omitted.

\begin{thm} If $(\overline{\overline{g}}_{\alpha},\overline{\overline{q}}_{\alpha})\in H\times U_{ad}$ is the
unique solution of the vectorial optimal control problem
(\ref{PControlalfa}), and $\overline{g}_{\alpha}$ and
$\overline{q}_{\alpha}$ are the unique solutions of the scalar
optimal control problems (\ref{PControlalfaJ1}) and
(\ref{PControlalfaJ2}) respectively, then we have the following
estimations
\[
\|\overline{q}_{\alpha}-\overline{\overline{q}}_{\alpha}\|_{Q}\leq
\frac{\|\gamma_{0}\|}{\lambda
M_{2}}\|u_{(\alpha,\overline{\overline{g}}_{\alpha},\overline{\overline{q}}_{\alpha})}-u_{(\alpha,\overline{\overline{g}}_{\alpha},
\overline{q}_{\alpha})}\|_{H}
\]
\[
\|\overline{g}_{\alpha}-\overline{\overline{g}}_{\alpha}\|_{H}\leq
\frac{1}{\lambda
M_{1}}\|u_{(\alpha,\overline{\overline{g}}_{\alpha},\overline{\overline{q}}_{\alpha})}-u_{(\alpha,\overline{g}_{\alpha},\overline{\overline{q}}_{\alpha})}\|_{H}.
\]
\end{thm}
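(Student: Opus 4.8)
The plan is to mirror, step by step, the argument used for Theorem~3.2 (the problem $P$ case), replacing the objects attached to $P$ by their $P_{\alpha}$ counterparts: the form $a$ by $a_{\alpha}$, the state $u_{(g,q)}$ by $u_{(\alpha,g,q)}$, and the adjoint state $p_{(g,q)}$ by $p_{(\alpha,g,q)}$. The ingredients I would assume are the vectorial optimality condition of Theorem~2.2(d), the scalar optimality conditions for (\ref{PControlalfaJ1}) and (\ref{PControlalfaJ2}) (the $\alpha$-versions of the conditions in Gariboldi and Tarzia (2003), (2008)), and the adjoint stability estimate obtained from the coercivity of $a_{\alpha}$, i.e. the $P_{\alpha}$-analogue of (\ref{estim-d}) applied to the adjoint problem (\ref{Padjuntovariacalfa}).

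For the boundary estimate I would fix $g=\overline{\overline{g}}_{\alpha}$. The scalar problem (\ref{PControlalfaJ2}) then yields
\[
(M_{2}\overline{q}_{\alpha}-p_{(\alpha,\overline{\overline{g}}_{\alpha},\overline{q}_{\alpha})},\eta-\overline{q}_{\alpha})_{Q}\geq 0,\quad\forall\eta\in U_{ad},
\]
while Theorem~2.2(d), evaluated at $h=\overline{\overline{g}}_{\alpha}$ so that the distributed term vanishes, gives
\[
(M_{2}\overline{\overline{q}}_{\alpha}-p_{(\alpha,\overline{\overline{g}}_{\alpha},\overline{\overline{q}}_{\alpha})},\eta-\overline{\overline{q}}_{\alpha})_{Q}\geq 0,\quad\forall\eta\in U_{ad}.
\]
Testing the first inequality with $\eta=\overline{\overline{q}}_{\alpha}$ and the second with $\eta=\overline{q}_{\alpha}$ and adding, I obtain
\[
\big(M_{2}(\overline{q}_{\alpha}-\overline{\overline{q}}_{\alpha})+(p_{(\alpha,\overline{\overline{g}}_{\alpha},\overline{\overline{q}}_{\alpha})}-p_{(\alpha,\overline{\overline{g}}_{\alpha},\overline{q}_{\alpha})}),\,\overline{\overline{q}}_{\alpha}-\overline{q}_{\alpha}\big)_{Q}\geq 0.
\]
Expanding and applying the Cauchy--Schwarz inequality reduces this to $M_{2}\|\overline{q}_{\alpha}-\overline{\overline{q}}_{\alpha}\|_{Q}\leq\|p_{(\alpha,\overline{\overline{g}}_{\alpha},\overline{\overline{q}}_{\alpha})}-p_{(\alpha,\overline{\overline{g}}_{\alpha},\overline{q}_{\alpha})}\|_{Q}$; bounding the boundary term by $\|\gamma_{0}\|\,\|p_{(\alpha,\overline{\overline{g}}_{\alpha},\overline{\overline{q}}_{\alpha})}-p_{(\alpha,\overline{\overline{g}}_{\alpha},\overline{q}_{\alpha})}\|_{V}$ and then invoking the adjoint stability estimate gives the first inequality.

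For the distributed estimate the argument is identical but simpler: the control $g$ ranges over all of $H$, so the corresponding optimality conditions are genuine equalities ($M_{1}\overline{g}_{\alpha}+p_{(\alpha,\overline{g}_{\alpha},\overline{\overline{q}}_{\alpha})}=0$ and $M_{1}\overline{\overline{g}}_{\alpha}+p_{(\alpha,\overline{\overline{g}}_{\alpha},\overline{\overline{q}}_{\alpha})}=0$), and no trace operator intervenes since the adjoint enters through the $H$-inner product rather than a boundary term. Subtracting and taking norms gives $M_{1}\|\overline{g}_{\alpha}-\overline{\overline{g}}_{\alpha}\|_{H}\leq\|p_{(\alpha,\overline{\overline{g}}_{\alpha},\overline{\overline{q}}_{\alpha})}-p_{(\alpha,\overline{g}_{\alpha},\overline{\overline{q}}_{\alpha})}\|_{H}$, and the same adjoint estimate concludes. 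The one point requiring care---and the only genuine obstacle---is the coercivity constant: applied to $a_{\alpha}$ on $V$, the stability estimate naturally carries the factor $1/\lambda_{\alpha}$ with $\lambda_{\alpha}=\lambda_{1}\min(1,\alpha)$ rather than $1/\lambda$, so in the statement one should read the constant $\lambda$ as the coerciveness constant $\lambda_{\alpha}$ of the form $a_{\alpha}$ governing the adjoint problem (\ref{Padjuntovariacalfa}).
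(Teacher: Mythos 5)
Your argument is correct and is precisely the proof the paper intends: Theorem 3.6 is stated with its proof omitted as the $\alpha$-analogue of Theorem 3.2, and you reproduce that argument faithfully (pairing the scalar and vectorial optimality conditions, adding them, and invoking the adjoint stability estimate). Your closing caveat is also well taken: the coercive form governing (\ref{Padjuntovariacalfa}) is $a_{\alpha}$ on $V$, so the stability estimate naturally yields the constant $1/\lambda_{\alpha}$ with $\lambda_{\alpha}=\lambda_{1}\min(1,\alpha)$, and the $\lambda$ appearing in the paper's statement should indeed be read as that coerciveness constant.
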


In the way similar to Theorem 3.3, we can now characterize the solution
of the vectorial distributed and Neumann boundary optimal control
problem (\ref{PControlalfa}), without restrictions, proving that a
suitable operator $W_{\alpha}$ is a contraction. It is presented in
the next theorem and the proof is omitted. We define the operator
$W_{\alpha}:H\times Q\rightarrow H\times Q$, for each $\alpha>0$, by
the expression
\begin{equation}
W_{\alpha}(g,q)=(-\frac{1}{M_{1}}p_{(\alpha,g,q)},\frac{1}{M_{2}}p_{(\alpha,g,q)})\label{Walfa}.
\end{equation}

\begin{thm}
$W_{\alpha}$ is a Lipschitz operator over $H\times Q$, that is,
there exists a positive constant
$C_{0\alpha}=C_{0\alpha}(\lambda_{\alpha},\gamma_{0},M_{1},M_{2})$, such
that:
\begin{equation}
\|W_{\alpha}(g_{2},q_{2})-W_{\alpha}(g_{1},q_{1})\|_{H\times
Q}\leq C_{0\alpha}\|(g_{2}-g_{1},q_{2}-q_{1})\|_{H\times Q}
\end{equation}
and $W_{\alpha}$ is a contraction operator if and only if data
satisfy the following inequality:
\begin{equation}
C_{0\alpha}=\frac{\sqrt{2}}{\lambda_{\alpha}^{2}}\sqrt{\frac{1}{M_{1}^{2}}+\frac{\|\gamma_{0}\|^{2}}{M_{2}^{2}}}(1+\|\gamma_{0}\|)<1.\label{contracalfa}
\end{equation}
\end{thm}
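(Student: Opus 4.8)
The plan is to mirror exactly the argument of Theorem 3.3, replacing the coerciveness constant $\lambda$ of $a$ on $V_{0}$ by the coerciveness constant $\lambda_{\alpha}=\lambda_{1}\min(1,\alpha)$ of $a_{\alpha}$ on $V$, and replacing the variational equalities (\ref{Pvariacional}) and (\ref{Padjuntovariac}) by their $\alpha$-counterparts (\ref{Palfavariacional}) and (\ref{Padjuntovariacalfa}). The whole proof reduces to producing the two Lipschitz estimates for the state and the adjoint state associated with $P_{\alpha}$, and then chaining them together as in the unperturbed case.

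First I would establish the state estimate. Subtracting the variational equalities (\ref{Palfavariacional}) written for two data pairs gives, for every $v\in V$,
\[
a_{\alpha}(u_{(\alpha,g_{2},q_{2})}-u_{(\alpha,g_{1},q_{1})},v)=(g_{2}-g_{1},v)_{H}-(q_{2}-q_{1},v)_{Q}.
\]
Choosing $v=u_{(\alpha,g_{2},q_{2})}-u_{(\alpha,g_{1},q_{1})}$, applying coerciveness of $a_{\alpha}$ with constant $\lambda_{\alpha}$ on the left, and bounding the right-hand side by Cauchy--Schwarz together with the trace inequality $\|v\|_{Q}=\|\gamma_{0}v\|_{Q}\leq\|\gamma_{0}\|\,\|v\|_{V}$, I obtain the analogue of (\ref{estim-a}),
\[
\|u_{(\alpha,g_{2},q_{2})}-u_{(\alpha,g_{1},q_{1})}\|_{V}\leq\frac{1}{\lambda_{\alpha}}\left(\|g_{2}-g_{1}\|_{H}+\|\gamma_{0}\|\,\|q_{2}-q_{1}\|_{Q}\right).
\]
Repeating the same computation with the adjoint equalities (\ref{Padjuntovariacalfa}), whose right-hand side now carries $u_{(\alpha,g,q)}-z_{d}$, yields the analogue of (\ref{estim-d}),
\[
\|p_{(\alpha,g_{2},q_{2})}-p_{(\alpha,g_{1},q_{1})}\|_{V}\leq\frac{1}{\lambda_{\alpha}}\|u_{(\alpha,g_{2},q_{2})}-u_{(\alpha,g_{1},q_{1})}\|_{H}.
\]

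Next I would assemble the Lipschitz bound. From the definition (\ref{Walfa}),
\[
\|W_{\alpha}(g_{2},q_{2})-W_{\alpha}(g_{1},q_{1})\|^{2}_{H\times Q}=\frac{1}{M_{1}^{2}}\|p_{(\alpha,g_{2},q_{2})}-p_{(\alpha,g_{1},q_{1})}\|^{2}_{H}+\frac{1}{M_{2}^{2}}\|p_{(\alpha,g_{2},q_{2})}-p_{(\alpha,g_{1},q_{1})}\|^{2}_{Q},
\]
where the first term is controlled by $\|\cdot\|_{V}$ (since $\|\cdot\|_{H}\leq\|\cdot\|_{V}$) and the second by $\|\gamma_{0}\|^{2}\|\cdot\|^{2}_{V}$ through the trace operator, exactly as in Theorem 3.3. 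Substituting the two estimates above produces the factor $\lambda_{\alpha}^{-4}$ and the bracket $[\|g_{2}-g_{1}\|_{H}+\|\gamma_{0}\|\,\|q_{2}-q_{1}\|_{Q}]^{2}$. Finally, the elementary inequality $(a+\|\gamma_{0}\|b)^{2}\leq 2(1+\|\gamma_{0}\|)^{2}(a^{2}+b^{2})$ converts the bracket into $\|(g_{2}-g_{1},q_{2}-q_{1})\|^{2}_{H\times Q}$ and delivers precisely the constant $C_{0\alpha}$ of (\ref{contracalfa}); the contraction criterion $C_{0\alpha}<1$ is then immediate.

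There is no genuine obstacle here, the argument being a faithful transcription of Theorem 3.3. The only point requiring care is that the coerciveness constant $\lambda_{\alpha}=\lambda_{1}\min(1,\alpha)$ now depends on $\alpha$, so the resulting Lipschitz constant $C_{0\alpha}$ degrades as $\alpha\to 0^{+}$ (where $\lambda_{\alpha}\to 0$) and the contraction condition (\ref{contracalfa}) may fail for small $\alpha$, whereas for large $\alpha$ one has $\lambda_{\alpha}=\lambda_{1}$ and the condition stabilizes. Tracking this $\alpha$-dependence correctly through both estimates is the entire content of the proof.
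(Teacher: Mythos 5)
Your proposal is correct and follows exactly the route the paper intends: the paper omits the proof of this theorem, stating only that it proceeds ``in the way similar to Theorem 3.3,'' and your argument is precisely that transcription, replacing the coerciveness constant $\lambda$ of $a$ on $V_{0}$ by $\lambda_{\alpha}=\lambda_{1}\min(1,\alpha)$ of $a_{\alpha}$ on $V$ and chaining the state and adjoint Lipschitz estimates as in (\ref{estim-a})--(\ref{estim-d}). Your closing observation about the degradation of $C_{0\alpha}$ as $\alpha\to 0^{+}$ is a correct and worthwhile remark on the $\alpha$-dependence of the contraction condition (\ref{contracalfa}).
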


\begin{cor}
If data satisfy inequality $C_{0\alpha}<1$, then the unique solution
$(\overline{\overline{g}}_{\alpha},\overline{\overline{q}}_{\alpha})\in
H\times Q$ of the vectorial optimal control problem
(\ref{PControlalfa}) can be obtained as the unique fixed point of
the operator $W_{\alpha}$, that is:
\[
W_{\alpha}(\overline{\overline{g}}_{\alpha},\overline{\overline{q}}_{\alpha})=(-\frac{1}{M_{1}}
p_{(\alpha,\overline{\overline{g}}_{\alpha},\overline{\overline{q}}_{\alpha})},\frac{1}{M_{2}}
p_{(\alpha,\overline{\overline{g}}_{\alpha},\overline{\overline{q}}_{\alpha})})=(\overline{\overline{g}}_{\alpha},\overline{\overline{q}}_{\alpha}).
\]
\end{cor}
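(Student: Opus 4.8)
The plan is to reproduce the proof of Theorem 3.3 verbatim, replacing the bilinear form $a$ and its coerciveness constant $\lambda$ by $a_{\alpha}$ and $\lambda_{\alpha}=\lambda_{1}\min(1,\alpha)$, and replacing the state/adjoint pair $(u_{(g,q)},p_{(g,q)})$ by $(u_{(\alpha,g,q)},p_{(\alpha,g,q)})$. The only two ingredients needed are the $\alpha$-analogues of the a priori estimates (\ref{estim-a}) and (\ref{estim-d}); once these are in hand, the Lipschitz constant $C_{0\alpha}$ falls out of the same chain of inequalities, and the contraction criterion is simply $C_{0\alpha}<1$.

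First I would establish the state estimate. Writing $u_{i}=u_{(\alpha,g_{i},q_{i})}$ and subtracting the two variational identities (\ref{Palfavariacional}), the source terms $\alpha(b,v)_{R}$ cancel, leaving $a_{\alpha}(u_{1}-u_{2},v)=(g_{1}-g_{2},v)_{H}-(q_{1}-q_{2},v)_{Q}$ for all $v\in V$. Choosing $v=u_{1}-u_{2}$, bounding the left side below by coerciveness of $a_{\alpha}$ and the right side above by Cauchy--Schwarz together with $\|w\|_{H}\leq\|w\|_{V}$ and the trace inequality $\|\gamma_{0}w\|_{Q}\leq\|\gamma_{0}\|\,\|w\|_{V}$, I obtain
\[
\|u_{(\alpha,g_{1},q_{1})}-u_{(\alpha,g_{2},q_{2})}\|_{V}\leq\frac{1}{\lambda_{\alpha}}\big(\|g_{2}-g_{1}\|_{H}+\|\gamma_{0}\|\,\|q_{2}-q_{1}\|_{Q}\big).
\]
The same manipulation applied to the adjoint identities (\ref{Padjuntovariacalfa}), subtracting and testing with $v=p_{1}-p_{2}$, yields the $\alpha$-version of (\ref{estim-d}), namely $\|p_{(\alpha,g_{1},q_{1})}-p_{(\alpha,g_{2},q_{2})}\|_{V}\leq\frac{1}{\lambda_{\alpha}}\|u_{(\alpha,g_{1},q_{1})}-u_{(\alpha,g_{2},q_{2})}\|_{H}$.

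Next I would combine these. From the definition (\ref{Walfa}), the $H$-component of $W_{\alpha}(g_{2},q_{2})-W_{\alpha}(g_{1},q_{1})$ is $-\frac{1}{M_{1}}(p_{2}-p_{1})$ measured in $H$, and the $Q$-component is $\frac{1}{M_{2}}\gamma_{0}(p_{2}-p_{1})$ measured in $Q$, so
\[
\|W_{\alpha}(g_{2},q_{2})-W_{\alpha}(g_{1},q_{1})\|_{H\times Q}^{2}\leq\Big(\frac{1}{M_{1}^{2}}+\frac{\|\gamma_{0}\|^{2}}{M_{2}^{2}}\Big)\|p_{2}-p_{1}\|_{V}^{2}.
\]
Chaining the two estimates gives $\|p_{2}-p_{1}\|_{V}\leq\lambda_{\alpha}^{-2}(\|g_{2}-g_{1}\|_{H}+\|\gamma_{0}\|\,\|q_{2}-q_{1}\|_{Q})$, and the elementary inequality $(\|g_{2}-g_{1}\|_{H}+\|\gamma_{0}\|\,\|q_{2}-q_{1}\|_{Q})^{2}\leq 2(1+\|\gamma_{0}\|)^{2}\|(g_{2}-g_{1},q_{2}-q_{1})\|_{H\times Q}^{2}$ (from $a+\|\gamma_{0}\|b\leq(1+\|\gamma_{0}\|)(a+b)$ and $(a+b)^{2}\leq 2(a^{2}+b^{2})$) produces exactly $C_{0\alpha}^{2}$ as in (\ref{contracalfa}). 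Taking square roots gives the Lipschitz bound, and $W_{\alpha}$ is a contraction precisely when $C_{0\alpha}<1$.

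I do not expect a genuine obstacle: the argument is structurally identical to Theorem 3.3. The hard part, to the extent there is one, is purely bookkeeping of the coerciveness constant — one must carry $\lambda_{\alpha}=\lambda_{1}\min(1,\alpha)$ throughout, noting that it ends up raised to the fourth power after composing the state and adjoint estimates, while the trace-operator norm $\|\gamma_{0}\|$ and the constants $M_{1},M_{2}$ are independent of $\alpha$. This is exactly why the resulting $C_{0\alpha}$ differs from the constant $C_{0}$ of (\ref{Contrac}) only through the replacement of $\lambda$ by $\lambda_{\alpha}$.
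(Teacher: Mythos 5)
Your computation correctly establishes the Lipschitz bound and the contraction criterion for $W_{\alpha}$, but that is the content of Theorem 3.7, which the corollary already presupposes through its hypothesis $C_{0\alpha}<1$. The actual claim of the corollary --- that the optimal control $(\overline{\overline{g}}_{\alpha},\overline{\overline{q}}_{\alpha})$ of (\ref{PControlalfa}) \emph{is} the unique fixed point of $W_{\alpha}$ --- is never addressed: nothing in your chain of inequalities connects the minimizer of $J_{\alpha}$ to the map $W_{\alpha}$, and you never invoke the Banach fixed point theorem either.

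The missing ingredient is the first-order optimality condition of Theorem 2.2(d). In the setting of the corollary the problem is considered without restrictions, $U_{ad}=Q$, so the minimization is over the whole space $H\times Q$ and the variational inequality holds for every $(h,\eta)\in H\times Q$; testing with $(h,\eta)=(\overline{\overline{g}}_{\alpha}\pm h',\overline{\overline{q}}_{\alpha}\pm\eta')$ turns it into the equalities $p_{(\alpha,\overline{\overline{g}}_{\alpha},\overline{\overline{q}}_{\alpha})}+M_{1}\overline{\overline{g}}_{\alpha}=0$ in $H$ and $M_{2}\overline{\overline{q}}_{\alpha}-p_{(\alpha,\overline{\overline{g}}_{\alpha},\overline{\overline{q}}_{\alpha})}=0$ in $Q$, which is precisely $W_{\alpha}(\overline{\overline{g}}_{\alpha},\overline{\overline{q}}_{\alpha})=(\overline{\overline{g}}_{\alpha},\overline{\overline{q}}_{\alpha})$. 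Since $C_{0\alpha}<1$ makes $W_{\alpha}$ a contraction on the complete space $H\times Q$, Banach's theorem gives a unique fixed point, which must therefore coincide with the optimal control. Without this step, deriving the fixed-point equation from the optimality condition, the corollary is not proved; what you have written is a (correct) proof of the prerequisite theorem rather than of the statement at hand.
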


\section{\large Convergence when $\alpha\rightarrow +\infty$}

\begin{lem}
For each $\alpha >0$, $(g,q)\in H\times Q$, $b\in
H^{1/2}(\Gamma_{1})$, we have the following limits:
\begin{equation}
i)
\lim\limits_{\alpha\rightarrow\infty}\|u_{(\alpha,g,q)}-u_{(g,q)}\|_{V}=0\qquad
ii)
\lim\limits_{\alpha\rightarrow\infty}\|p_{(\alpha,g,q)}-p_{(g,q)}\|_{V}=0\label{convpfijo}
\end{equation}
\end{lem}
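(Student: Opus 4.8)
The plan is to establish both limits by the classical penalization (energy) argument, viewing $P_{\alpha}$ as the problem $P$ perturbed by the boundary penalty $\alpha(\cdot,\cdot)_{R}$ on $\Gamma_{1}$, and to reduce each claim to showing that a suitable difference vanishes strongly in $V$. Throughout I may assume $\alpha\geq 1$, so that the coercivity constant $\lambda_{\alpha}=\lambda_{1}\min(1,\alpha)$ of $a_{\alpha}$ equals the fixed value $\lambda_{1}>0$, independent of $\alpha$; this uniform coercivity is what supplies all the $\alpha$-uniform bounds below.

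For part (i), write $w_{\alpha}=u_{(\alpha,g,q)}-u_{(g,q)}\in V$ and recall $u_{(g,q)}\big|_{\Gamma_{1}}=b$. Subtracting and regrouping (\ref{Palfavariacional}) and (\ref{Pvariacional}), and using that $u_{(\alpha,g,q)}-b=w_{\alpha}$ on $\Gamma_{1}$, I obtain the reformulation
\begin{equation}
a_{\alpha}(w_{\alpha},v)=L_{(g,q)}(v)-a(u_{(g,q)},v)=:F(v),\qquad \forall v\in V,\nonumber
\end{equation}
where $F$ is a fixed bounded functional (independent of $\alpha$) that vanishes on $V_{0}$ precisely because $u_{(g,q)}$ solves (\ref{Pvariacional}). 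Testing with $v=w_{\alpha}$ and using coercivity gives $\lambda_{1}\|w_{\alpha}\|_{V}^{2}\leq F(w_{\alpha})\leq\|F\|_{V'}\|w_{\alpha}\|_{V}$, hence the uniform bound $\|w_{\alpha}\|_{V}\leq\|F\|_{V'}/\lambda_{1}$. Discarding the nonnegative term $a(w_{\alpha},w_{\alpha})$ in $a_{\alpha}(w_{\alpha},w_{\alpha})=F(w_{\alpha})$ yields $\alpha\|w_{\alpha}\|_{R}^{2}\leq C$, so $\|u_{(\alpha,g,q)}-b\|_{R}=\|w_{\alpha}\|_{R}\to0$, the step that recovers the Dirichlet condition on $\Gamma_{1}$. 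I then extract a weakly convergent subsequence $w_{\alpha}\rightharpoonup w$ in $V$; the trace convergence forces $w\big|_{\Gamma_{1}}=0$, i.e. $w\in V_{0}$, and testing the reformulation against fixed $v\in V_{0}$ (where the penalty term drops out and $F(v)=0$) gives $a(w_{\alpha},v)=0$, so $a(w,v)=0$ for all $v\in V_{0}$. Coercivity of $a$ on $V_{0}$ forces $w=0$, so the whole family converges weakly to $0$, and strong convergence follows from $\lambda_{1}\|w_{\alpha}\|_{V}^{2}\leq a_{\alpha}(w_{\alpha},w_{\alpha})=F(w_{\alpha})\to F(0)=0$, the limit being exactly weak convergence tested against the fixed functional $F$.

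Part (ii) is entirely analogous for $r_{\alpha}=p_{(\alpha,g,q)}-p_{(g,q)}$, noting $p_{(g,q)}\in V_{0}$ so that $\alpha(p_{(g,q)},v)_{R}$ vanishes and $a_{\alpha}(p_{(g,q)},v)=a(p_{(g,q)},v)$. Subtracting from (\ref{Padjuntovariacalfa}) and splitting the source as $(u_{(\alpha,g,q)}-z_{d},v)_{H}=(u_{(g,q)}-z_{d},v)_{H}+(u_{(\alpha,g,q)}-u_{(g,q)},v)_{H}$, I obtain
\begin{equation}
a_{\alpha}(r_{\alpha},v)=G(v)+(u_{(\alpha,g,q)}-u_{(g,q)},v)_{H},\qquad\forall v\in V,\nonumber
\end{equation}
where $G(v)=(u_{(g,q)}-z_{d},v)_{H}-a(p_{(g,q)},v)$ is fixed and vanishes on $V_{0}$ by (\ref{Padjuntovariac}). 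The only new ingredient is the extra source term, which is controlled by part (i): since $\|u_{(\alpha,g,q)}-u_{(g,q)}\|_{H}\to0$ and $\|r_{\alpha}\|_{V}$ is uniformly bounded, this term is $\alpha$-uniformly small and tends to $0$ (both in the uniform/trace estimates and in the final energy identity). With this, the four steps—uniform bound, trace estimate $\|r_{\alpha}\|_{R}\to0$, identification of the weak limit as the unique element of $V_{0}$ annihilating $a(\cdot,\cdot)$ on $V_{0}$, and the energy equality for strong convergence—carry over and give (ii).

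The main obstacle is not any single estimate but the structural observation that makes the argument close: the functionals $F$ and $G$ annihilate $V_{0}$, which is nothing but the statement that $u_{(g,q)}$ and $p_{(g,q)}$ solve the limit problem $P$ and its adjoint. Coupled with the $\alpha$-uniform coercivity constant $\lambda_{1}$, this is what converts the penalty bound $\alpha\|w_{\alpha}\|_{R}^{2}\leq C$ into trace convergence and lets the weak limit be identified inside $V_{0}$; the passage from weak to strong convergence is then automatic from the energy identity, and the coupling from (i) into (ii) through the $H$-source is the only place where the two parts genuinely interact.
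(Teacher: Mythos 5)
Your argument is correct, and it is essentially the argument the paper has in mind: the paper gives no details here, delegating the proof to Gariboldi and Tarzia (2003, 2008), where exactly this penalization scheme is used --- uniform coercivity with constant $\lambda_{1}$ for $\alpha\geq 1$, the bound $\alpha\|u_{(\alpha,g,q)}-b\|_{R}^{2}\leq C$ recovering the Dirichlet trace, identification of the weak limit in $V_{0}$, and the energy identity upgrading weak to strong convergence. Your write-up supplies precisely the details the paper omits, including the correct coupling of part (ii) to part (i) through the $H$-source term.
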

\begin{proof}
We follow in a similar way to the one given in Gariboldi and Tarzia (2003) and
Gariboldi and Tarzia (2008).
\end{proof}

\begin{thm}
i) If $u_{(\overline{\overline{g}},\overline{\overline{q}})}$ and
$u_{(\alpha,\overline{\overline{g}}_{\alpha},\overline{\overline{q}}_{\alpha})}$
are the unique system states corresponding of the vectorial optimal
control problems (\ref{PControl}) and (\ref{PControlalfa})
respectively, then:
\begin{equation}
\lim\limits_{\alpha\rightarrow\infty}\|u_{(\alpha,\overline{\overline{g}}_{\alpha},\overline{\overline{q}}_{\alpha})}-u_{(\overline{\overline{g}},\overline{\overline{q}})}\|_{V}=0.\label{uconvfuerte}
\end{equation}

ii) If $p_{(\overline{\overline{g}},\overline{\overline{q}})}$ and
$p_{(\alpha,\overline{\overline{g}}_{\alpha},\overline{\overline{q}}_{\alpha})}$
are the unique adjoint states corresponding to the vectorial optimal
control problems (\ref{PControl}) and (\ref{PControlalfa})
respectively, then:
\begin{equation}
\lim\limits_{\alpha\rightarrow\infty}\|p_{(\alpha,\overline{\overline{g}}_{\alpha},\overline{\overline{q}}_{\alpha})}-p_{(\overline{\overline{g}},\overline{\overline{q}})}\|_{V}=0.\label{pconvfuerte}
\end{equation}

iii) If $(\overline{\overline{g}},\overline{\overline{q}})$ and
$(\overline{\overline{g}}_{\alpha},\overline{\overline{q}}_{\alpha})$
are the unique solutions of the \emph{simulta\-neous distributed and
Neumann boundary} optimal control problems (\ref{PControl}) and
(\ref{PControlalfa}) respectively, then:
\begin{equation}
\lim\limits_{\alpha\rightarrow\infty}\|(\overline{\overline{g}}_{\alpha},\overline{\overline{q}}_{\alpha})-(\overline{\overline{g}},\overline{\overline{q}})\|_{H\times
Q}=0.\label{controlesconvfuerte}
\end{equation}
\end{thm}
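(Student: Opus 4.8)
The plan is to combine a compactness argument with the weak lower semicontinuity of the cost functional and the fixed--control convergence already established in Lemma 4.1, working along an arbitrary sequence $\alpha\rightarrow\infty$ and recovering the full family at the end by uniqueness of the limit. First I would derive uniform a priori bounds. Since $(\overline{\overline{g}}_{\alpha},\overline{\overline{q}}_{\alpha})$ minimizes $J_{\alpha}$ over $H\times U_{ad}$ and $(0,0)\in H\times U_{ad}$, I have $J_{\alpha}(\overline{\overline{g}}_{\alpha},\overline{\overline{q}}_{\alpha})\le J_{\alpha}(0,0)=\frac12\|u_{(\alpha,0,0)}-z_{d}\|_{H}^{2}$; by Lemma 4.1 applied with $g=q=0$ the right-hand side stays bounded, so reading off the last two terms of (\ref{Jalfa}) bounds $\|\overline{\overline{g}}_{\alpha}\|_{H}$ and $\|\overline{\overline{q}}_{\alpha}\|_{Q}$ independently of $\alpha$. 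Writing $u_{(\alpha,\overline{\overline{g}}_{\alpha},\overline{\overline{q}}_{\alpha})}=C_{\alpha}(\overline{\overline{g}}_{\alpha},\overline{\overline{q}}_{\alpha})+u_{(\alpha,0,0)}$ and using the $P_{\alpha}$-analogue of (\ref{estim-a}) together with the boundedness of $u_{(\alpha,0,0)}$ then bounds $u_{(\alpha,\overline{\overline{g}}_{\alpha},\overline{\overline{q}}_{\alpha})}$ in $V$ uniformly for $\alpha\ge 1$. I can therefore extract a subsequence along which $(\overline{\overline{g}}_{\alpha},\overline{\overline{q}}_{\alpha})\rightharpoonup(f,\rho)$ weakly in $H\times Q$ and $u_{(\alpha,\overline{\overline{g}}_{\alpha},\overline{\overline{q}}_{\alpha})}\rightharpoonup u^{*}$ weakly in $V$; since $U_{ad}$ is convex and closed it is weakly closed, so $\rho\in U_{ad}$, and the compact embedding of $V$ into $H$ gives strong convergence $u_{(\alpha,\overline{\overline{g}}_{\alpha},\overline{\overline{q}}_{\alpha})}\rightarrow u^{*}$ in $H$.

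The crucial step is to identify $u^{*}$ as $u_{(f,\rho)}$, the state of $P$ for the limit control. Testing (\ref{Palfavariacional}) with $v\in V_{0}$ annihilates both $\Gamma_{1}$-terms (because $v\big|_{\Gamma_{1}}=0$), leaving $a(u_{(\alpha,\overline{\overline{g}}_{\alpha},\overline{\overline{q}}_{\alpha})},v)=(\overline{\overline{g}}_{\alpha},v)_{H}-(\overline{\overline{q}}_{\alpha},v)_{Q}$; passing to the weak limit gives $a(u^{*},v)=(f,v)_{H}-(\rho,v)_{Q}$ for all $v\in V_{0}$, the interior equation of (\ref{Pvariacional}). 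To recover the Dirichlet datum I would test (\ref{Palfavariacional}) with $v=u_{(\alpha,\overline{\overline{g}}_{\alpha},\overline{\overline{q}}_{\alpha})}-v_{0}$, whose trace on $\Gamma_{1}$ is $u_{(\alpha,\overline{\overline{g}}_{\alpha},\overline{\overline{q}}_{\alpha})}\big|_{\Gamma_{1}}-b$; the two $\alpha$-terms then collapse to $\alpha\|u_{(\alpha,\overline{\overline{g}}_{\alpha},\overline{\overline{q}}_{\alpha})}\big|_{\Gamma_{1}}-b\|_{R}^{2}$, while all remaining terms are bounded uniformly by the a priori estimates, forcing $\|u_{(\alpha,\overline{\overline{g}}_{\alpha},\overline{\overline{q}}_{\alpha})}\big|_{\Gamma_{1}}-b\|_{R}^{2}\le C/\alpha\rightarrow 0$. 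Continuity of the trace together with the weak $V$-convergence then yields $u^{*}\big|_{\Gamma_{1}}=b$, so $u^{*}\in K$ and, by uniqueness for (\ref{Pvariacional}), $u^{*}=u_{(f,\rho)}$. This penalization-to-Dirichlet passage, carried out while the controls themselves only converge weakly, is the step I expect to be the main obstacle.

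I would next show $(f,\rho)=(\overline{\overline{g}},\overline{\overline{q}})$. By weak lower semicontinuity of the norms and the strong $H$-convergence $u_{(\alpha,\overline{\overline{g}}_{\alpha},\overline{\overline{q}}_{\alpha})}\rightarrow u_{(f,\rho)}$, I obtain $J(f,\rho)\le\liminf_{\alpha}J_{\alpha}(\overline{\overline{g}}_{\alpha},\overline{\overline{q}}_{\alpha})$. On the other hand, optimality of $(\overline{\overline{g}}_{\alpha},\overline{\overline{q}}_{\alpha})$ and admissibility of $(\overline{\overline{g}},\overline{\overline{q}})$ give $J_{\alpha}(\overline{\overline{g}}_{\alpha},\overline{\overline{q}}_{\alpha})\le J_{\alpha}(\overline{\overline{g}},\overline{\overline{q}})$, and Lemma 4.1 at the fixed control $(\overline{\overline{g}},\overline{\overline{q}})$ shows $J_{\alpha}(\overline{\overline{g}},\overline{\overline{q}})\rightarrow J(\overline{\overline{g}},\overline{\overline{q}})$. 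Hence $J(f,\rho)\le J(\overline{\overline{g}},\overline{\overline{q}})$, and since $(f,\rho)\in H\times U_{ad}$ while $(\overline{\overline{g}},\overline{\overline{q}})$ is the unique minimizer from Theorem 2.1(d), I conclude $(f,\rho)=(\overline{\overline{g}},\overline{\overline{q}})$. The same chain is in fact a squeeze, so $J_{\alpha}(\overline{\overline{g}}_{\alpha},\overline{\overline{q}}_{\alpha})\rightarrow J(\overline{\overline{g}},\overline{\overline{q}})$; subtracting the convergent state term forces $\frac{M_{1}}{2}\|\overline{\overline{g}}_{\alpha}\|_{H}^{2}+\frac{M_{2}}{2}\|\overline{\overline{q}}_{\alpha}\|_{Q}^{2}\rightarrow\frac{M_{1}}{2}\|\overline{\overline{g}}\|_{H}^{2}+\frac{M_{2}}{2}\|\overline{\overline{q}}\|_{Q}^{2}$. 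Combined with the separate weak lower semicontinuity of each term and $M_{1},M_{2}>0$, this yields $\|\overline{\overline{g}}_{\alpha}\|_{H}\rightarrow\|\overline{\overline{g}}\|_{H}$ and $\|\overline{\overline{q}}_{\alpha}\|_{Q}\rightarrow\|\overline{\overline{q}}\|_{Q}$; together with the weak convergence, the Hilbert-space structure upgrades this to the strong convergence (\ref{controlesconvfuerte}).

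Finally, (\ref{uconvfuerte}) and (\ref{pconvfuerte}) would follow by the triangle inequality and the continuous-dependence estimates. For the states I split $\|u_{(\alpha,\overline{\overline{g}}_{\alpha},\overline{\overline{q}}_{\alpha})}-u_{(\overline{\overline{g}},\overline{\overline{q}})}\|_{V}\le\|u_{(\alpha,\overline{\overline{g}}_{\alpha},\overline{\overline{q}}_{\alpha})}-u_{(\alpha,\overline{\overline{g}},\overline{\overline{q}})}\|_{V}+\|u_{(\alpha,\overline{\overline{g}},\overline{\overline{q}})}-u_{(\overline{\overline{g}},\overline{\overline{q}})}\|_{V}$; the second term tends to $0$ by Lemma 4.1(i) at the fixed control, and the first is controlled by the $P_{\alpha}$-analogue of (\ref{estim-a}), namely $\frac{1}{\lambda_{\alpha}}(\|\overline{\overline{g}}_{\alpha}-\overline{\overline{g}}\|_{H}+\|\gamma_{0}\|\|\overline{\overline{q}}_{\alpha}-\overline{\overline{q}}\|_{Q})$, which tends to $0$ by (\ref{controlesconvfuerte}) since $\lambda_{\alpha}=\lambda_{1}\min(1,\alpha)=\lambda_{1}$ for $\alpha\ge 1$. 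The same splitting for the adjoint states, using the $P_{\alpha}$-version of (\ref{estim-d}) and the state convergence just proved on the right-hand side of (\ref{Padjuntovariacalfa}) (compared against (\ref{Padjuntovariac})), gives (\ref{pconvfuerte}). Because all three limits are uniquely determined, the whole family, not merely the extracted subsequence, converges, which completes the proof.
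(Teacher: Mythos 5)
Your argument is correct, and Step 1 (uniform bounds from $J_{\alpha}(\overline{\overline{g}}_{\alpha},\overline{\overline{q}}_{\alpha})\leq J_{\alpha}(0,0)$, extraction of weak limits, identification of the limit state via the penalized Dirichlet datum, and identification of the limit control through the squeeze on the cost functionals) matches the paper's Step 1 almost exactly — you merely make explicit the trace estimate $\alpha\|u_{(\alpha,\overline{\overline{g}}_{\alpha},\overline{\overline{q}}_{\alpha})}|_{\Gamma_{1}}-b\|_{R}^{2}\leq C$ that the paper leaves implicit when it asserts $\mu\in K$ in (\ref{udebil}). Where you genuinely diverge is in upgrading weak to strong convergence, and in the logical order of (i), (ii), (iii). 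The paper first proves (\ref{uconvfuerte}) and (\ref{pconvfuerte}) directly, by testing the coercivity inequality for $a_{\alpha}$ against the weak convergences (\ref{udebil2})--(\ref{pdebil2}), and only then obtains (\ref{controlesconvfuerte}) from the optimality conditions of the two control problems, which yield the quantitative bounds (\ref{3}) and (\ref{4}) of the control error by the adjoint-state error. You instead prove (iii) first: the squeeze forces $J_{\alpha}(\overline{\overline{g}}_{\alpha},\overline{\overline{q}}_{\alpha})\to J(\overline{\overline{g}},\overline{\overline{q}})$, the state term converges by Rellich, and weak convergence plus norm convergence in the Hilbert spaces $H$ and $Q$ (using $M_{1},M_{2}>0$ and the separate lower semicontinuity of each term) gives strong convergence of the controls; (i) and (ii) then follow by the triangle inequality, the uniform-in-$\alpha$ continuous-dependence estimates (valid since $\lambda_{\alpha}=\lambda_{1}$ for $\alpha\geq1$), and Lemma 4.1 at the fixed limit control. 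Your route buys a shorter path to (iii) that never invokes the optimality conditions in the limit passage, and it reduces (i)--(ii) to Lipschitz dependence on the data; the paper's route buys the explicit estimates (\ref{3})--(\ref{4}), which quantify how the control error is controlled by the adjoint error and are of independent interest. Both are complete; your closing remark on recovering the full family from the uniqueness of the limits is the standard subsequence argument and is needed in either version.
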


\begin{proof} The proof is given by two step:

\textbf{Step 1.} From the optimal control problem (\ref{PControlalfa}) we deduce that there exist positive constants
$C_{1}$, $C_{2}$ and $C_{3}$, independent of $\alpha$, such that
\begin{equation}
\|u_{(\alpha,\overline{\overline{g}}_{\alpha},\overline{\overline{q}}_{\alpha})}-z_{d}\|_{H}\leq
C_{1},\quad \|\overline{\overline{g}}_{\alpha}\|_{H}\leq C_{2},\quad
\|\overline{\overline{q}}_{\alpha}\|_{Q}\leq C_{3}. \label{cotas}
\end{equation}
Now, if we take
$v=u_{(\alpha,\overline{\overline{g}}_{\alpha},\overline{\overline{q}}_{\alpha})}-u_{(\overline{\overline{g}},\overline{\overline{q}})}\in
V$ in the variational equality (\ref{Palfavariacional}), following
Gariboldi and Tarzia (2003) or Gariboldi and Tarzia (2008), we
obtain, for $\alpha>1$,
$\|u_{(\alpha,\overline{\overline{g}}_{\alpha},\overline{\overline{q}}_{\alpha})}\|_{V}\leq
C_{4}$ where
$C_{4}=C_{4}(C_{2},C_{3},\gamma_{0},u_{(\overline{\overline{g}},\overline{\overline{q}})},\lambda_{1})$
is independent of $\alpha$. Therefore,
\begin{equation}
\exists \mu\in K \,\text{ such that }\,
u_{(\alpha,\overline{\overline{g}}_{\alpha},\overline{\overline{q}}_{\alpha})}\rightharpoonup
\mu\, \text{ weakly in }V,\text{ when }\alpha\rightarrow
+\infty.\label{udebil}
\end{equation}
Taking
$v=p_{(\alpha,\overline{\overline{g}}_{\alpha},\overline{\overline{q}}_{\alpha})}-p_{(\overline{\overline{g}},\overline{\overline{q}})}\in
V$ in the variational equality (\ref{Padjuntovariacalfa}), we obtain
that there exists a positive constant
$C_{5}=C_{5}(C_{1},p_{(\overline{\overline{g}},\overline{\overline{q}})},\lambda_{1})$,
such that
$\|p_{(\alpha,\overline{\overline{g}}_{\alpha},\overline{\overline{q}}_{\alpha})}\|_{V}\leq
C_{5}$ and next
\begin{equation}
\exists \xi\in V_{0} \,\text{ such that }\,
p_{(\alpha,\overline{\overline{g}}_{\alpha},\overline{\overline{q}}_{\alpha})}\rightharpoonup
\xi\, \text{ weakly in }V,\text{ when }\alpha\rightarrow
+\infty.\label{pdebil}
\end{equation}
Moreover, from (\ref{cotas}), we deduce that there exist $f\in Q$
and $h\in H$ such that
\begin{equation}
\overline{\overline{q}}_{\alpha}\rightharpoonup f \text{ weakly in
}Q,\text{ when }\alpha\rightarrow +\infty\label{qdebil}
\end{equation}
\begin{equation}
\overline{\overline{g}}_{\alpha}\rightharpoonup h \text{ weakly in
}H,\text{ when }\alpha\rightarrow +\infty. \label{gdebil}
\end{equation}
For $v\in V_{0}$, taking into account (\ref{udebil}),
(\ref{qdebil}), (\ref{gdebil}) and taking the limit as $\alpha$
goes to infinity, we have that
\begin{equation}
a(\mu,v)=(h,v)_{H}-(f,v)_{Q},\text{ }\forall v\in V_{0},\mu\in K
\label{Pvariacional2}
\end{equation}
and by the uniqueness of the solution of (\ref{Pvariacional}), we get $\mu=u_{hf}$.\\
Now, for $v\in V_{0}$, taking into account (\ref{pdebil}),
with the parameter $\alpha$ going to infinity in the
variational equality (\ref{Padjuntovariacalfa}), we have that
\begin{equation}
a(\xi,v)=(u_{hf}-z_{d},v)_{H},\text{ }\forall v\in V_{0},\xi\in
V_{0} \label{Padjuntovariac2}
\end{equation}
and from the uniqueness of the solution of (\ref{Padjuntovariac}),
we get $\xi=p_{hf}$. Next,
\[
J(h,f)\leq \liminf
\limits_{\alpha\rightarrow\infty}J_{\alpha}(\overline{\overline{g}}_{\alpha},\overline{\overline{q}}_{\alpha})\leq\liminf
\limits_{\alpha\rightarrow\infty}J_{\alpha}(h',f')=
\]
\[
=\lim
\limits_{\alpha\rightarrow\infty}J_{\alpha}(h',f')=J(h',f'),\quad
\forall (h',f')\in H\times Q,
\]
and from the uniqueness of the solution of the problem
(\ref{PControl}), we have that $h=\overline{\overline{g}}$ and
$f=\overline{\overline{q}}$. Therefore, we have proved that
\begin{equation}
u_{(\alpha,\overline{\overline{g}}_{\alpha},\overline{\overline{q}}_{\alpha})}\rightharpoonup
u_{(\overline{\overline{g}},\overline{\overline{q}})} \text{ weakly
in }V,\text{ when }\alpha\rightarrow +\infty\label{udebil2}
\end{equation}
\begin{equation}
p_{(\alpha,\overline{\overline{g}}_{\alpha},\overline{\overline{q}}_{\alpha})}\rightharpoonup
p_{(\overline{\overline{g}},\overline{\overline{q}})} \text{ weakly
in }V,\text{ when }\alpha\rightarrow +\infty.\label{pdebil2}
\end{equation}
\textbf{Step 2.} Taking $h=0$ and $\eta=\overline{\overline{q}}$ in
the optimality condition for the vectorial optimal control problem
(\ref{PControlalfa}), $h=0$ and
$\eta=\overline{\overline{q}}_{\alpha}$ in the optimality condition
for the vectorial optimal control problem (\ref{PControl}), we have
\[
(\overline{\overline{q}}_{\alpha}-\overline{\overline{q}},M_{2}(\overline{\overline{q}}-\overline{\overline{q}}_{\alpha})+(p_{(\alpha,\overline{\overline{g}}_{\alpha},\overline{\overline{q}}_{\alpha})}-p_{(\overline{\overline{g}},\overline{\overline{q}})}))_{Q}\geq
0,
\]
then, we deduce that
\begin{equation}
\|\overline{\overline{q}}-\overline{\overline{q}}_{\alpha}\|_{Q}\leq
\frac{\|\gamma_{0}\|}{M_{2}}\|p_{(\alpha,\overline{\overline{g}}_{\alpha},\overline{\overline{q}}_{\alpha})}-p_{(\overline{\overline{g}},\overline{\overline{q}})}\|_{V}.\label{3}
\end{equation}
Next, in similar way, taking $h=\overline{\overline{g}}$ and
$\eta=0$ in the optimality condition for the problem
(\ref{PControlalfa}) and $h=\overline{\overline{g}}_{\alpha}$ and
$\eta=0$ in the optimality condition for the problem
(\ref{PControl}), we prove that
\begin{equation}
\|\overline{\overline{g}}-\overline{\overline{g}}_{\alpha}\|_{H}\leq
\frac{1}{M_{2}}\|p_{(\alpha,\overline{\overline{g}}_{\alpha},\overline{\overline{q}}_{\alpha})}-p_{(\overline{\overline{g}},\overline{\overline{q}})}\|_{V}.\label{4}
\end{equation}
Now, from (\ref{udebil2}) and the following inequalities, for
$\alpha >1$,
\[
\lambda_{1}\|u_{(\alpha,\overline{\overline{g}}_{\alpha},\overline{\overline{q}}_{\alpha})}-u_{(\overline{\overline{g}},\overline{\overline{q}})}\|_{V}^{2}+(\alpha-1)
\|u_{(\alpha,\overline{\overline{g}}_{\alpha},\overline{\overline{q}}_{\alpha})}-u_{(\overline{\overline{g}},\overline{\overline{q}})}\|_{R}^{2}\leq
\]
\[
\leq
(g,u_{(\alpha,\overline{\overline{g}}_{\alpha},\overline{\overline{q}}_{\alpha})}-u_{(\overline{\overline{g}},\overline{\overline{q}})})_{H}-(q,u_{(\alpha,\overline{\overline{g}}_{\alpha},\overline{\overline{q}}_{\alpha})}-u_{(\overline{\overline{g}},\overline{\overline{q}})})_{Q}
\]
\[
-a(u_{(\overline{\overline{g}},\overline{\overline{q}})},u_{(\alpha,\overline{\overline{g}}_{\alpha},\overline{\overline{q}}_{\alpha})}-u_{(\overline{\overline{g}},\overline{\overline{q}})})
\]
the result (\ref{uconvfuerte}) holds. In similar way, from (\ref{pdebil2}) and the inequality
\[
\lambda_{1}\|p_{(\alpha,\overline{\overline{g}}_{\alpha},\overline{\overline{q}}_{\alpha})}-p_{(\overline{\overline{g}},\overline{\overline{q}})}\|_{V}^{2}\leq
\]
\[
\leq
(u_{(\alpha,\overline{\overline{g}}_{\alpha},\overline{\overline{q}}_{\alpha})}-z_{d},p_{(\alpha,\overline{\overline{g}}_{\alpha},\overline{\overline{q}}_{\alpha})}-p_{(\overline{\overline{g}},\overline{\overline{q}})})_{H}
-
\]
\[
-a(p_{(\overline{\overline{g}},\overline{\overline{q}})},p_{(\alpha,\overline{\overline{g}}_{\alpha},\overline{\overline{q}}_{\alpha})}-p_{(\overline{\overline{g}},\overline{\overline{q}})})
-\alpha
(p_{(\overline{\overline{g}},\overline{\overline{q}})},p_{(\alpha,\overline{\overline{g}}_{\alpha},\overline{\overline{q}}_{\alpha})}-p_{(\overline{\overline{g}},\overline{\overline{q}})})_{R}
\]
we obtain the limit (\ref{pconvfuerte}). Finally, from (\ref{uconvfuerte}), (\ref{pconvfuerte}) and the
estimations (\ref{3}) and (\ref{4}), the limit
(\ref{controlesconvfuerte}) holds.
\end{proof}

\begin{cor}
If $(\overline{\overline{g}},\overline{\overline{q}})$ and
$(\overline{\overline{g}}_{\alpha},\overline{\overline{q}}_{\alpha})$
are the unique solutions of the \emph{simultaneous distributed and
Neumann boundary} optimal control problems (\ref{PControl}) and
(\ref{PControlalfa}), respectively, then we have:
\[
\lim_{\alpha\rightarrow\infty}|J_{\alpha}(\overline{\overline{g}}_{\alpha},\overline{\overline{q}}_{\alpha})-
J(\overline{\overline{g}},\overline{\overline{q}})|=0.
\]

\begin{proof}
It follows from the definition of $J$, $J_{\alpha}$ and
the last theorem.
\end{proof}
\end{cor}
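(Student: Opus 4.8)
The plan is to reduce the convergence of the cost functionals to the strong convergences already established in Theorem 4.2. Writing out the definitions (\ref{J}) and (\ref{Jalfa}) at the respective optimal controls, the quantity $J_{\alpha}(\overline{\overline{g}}_{\alpha},\overline{\overline{q}}_{\alpha})-J(\overline{\overline{g}},\overline{\overline{q}})$ splits into three differences: the fidelity term $\frac{1}{2}(\|u_{(\alpha,\overline{\overline{g}}_{\alpha},\overline{\overline{q}}_{\alpha})}-z_{d}\|_{H}^{2}-\|u_{(\overline{\overline{g}},\overline{\overline{q}})}-z_{d}\|_{H}^{2})$, the distributed Tikhonov term $\frac{M_{1}}{2}(\|\overline{\overline{g}}_{\alpha}\|_{H}^{2}-\|\overline{\overline{g}}\|_{H}^{2})$, and the boundary Tikhonov term $\frac{M_{2}}{2}(\|\overline{\overline{q}}_{\alpha}\|_{Q}^{2}-\|\overline{\overline{q}}\|_{Q}^{2})$. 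By the triangle inequality, it suffices to show that each of these three pieces tends to zero as $\alpha\to+\infty$.

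For the fidelity term I would invoke part (i) of Theorem 4.2, which gives $u_{(\alpha,\overline{\overline{g}}_{\alpha},\overline{\overline{q}}_{\alpha})}\to u_{(\overline{\overline{g}},\overline{\overline{q}})}$ strongly in $V$. Since $V=H^{1}(\Omega)$ embeds continuously in $H=L^{2}(\Omega)$, this convergence holds strongly in $H$ as well, so $u_{(\alpha,\overline{\overline{g}}_{\alpha},\overline{\overline{q}}_{\alpha})}-z_{d}\to u_{(\overline{\overline{g}},\overline{\overline{q}})}-z_{d}$ strongly in $H$, and by continuity of the map $w\mapsto\|w\|_{H}^{2}$ the first difference vanishes in the limit. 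For the two Tikhonov terms I would use part (iii) of the same theorem, namely $(\overline{\overline{g}}_{\alpha},\overline{\overline{q}}_{\alpha})\to(\overline{\overline{g}},\overline{\overline{q}})$ strongly in $H\times Q$, which yields $\overline{\overline{g}}_{\alpha}\to\overline{\overline{g}}$ in $H$ and $\overline{\overline{q}}_{\alpha}\to\overline{\overline{q}}$ in $Q$; again continuity of the squared-norm maps $w\mapsto\|w\|_{H}^{2}$ and $w\mapsto\|w\|_{Q}^{2}$ forces the second and third differences to zero. Summing the three contributions then gives $\lim_{\alpha\to\infty}|J_{\alpha}(\overline{\overline{g}}_{\alpha},\overline{\overline{q}}_{\alpha})-J(\overline{\overline{g}},\overline{\overline{q}})|=0$.

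There is no substantial obstacle in this argument; the only point that deserves emphasis is that all three terms are controlled by \emph{strong} convergence. Weak convergence of the states and controls—which is all that is available at the end of Step 1 of Theorem 4.2—would yield merely lower semicontinuity of the norms, and hence an inequality rather than the desired equality. It is precisely the upgrade from weak to strong convergence carried out in Step 2 of that theorem that makes the continuity of the norm functionals available and closes the argument.
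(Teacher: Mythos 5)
Your argument is correct and is exactly what the paper's one-line proof ("it follows from the definition of $J$, $J_{\alpha}$ and the last theorem") intends: decompose the difference of the cost functionals into the fidelity and the two Tikhonov terms and pass to the limit in each using the strong convergences of Theorem 4.2(i) and (iii) together with continuity of the squared norms. Your closing remark correctly identifies why the strong (rather than merely weak) convergence established in Step 2 of that theorem is what makes the argument close.
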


\section*{\large Acknowledgements}
This paper has been partially sponsored by the Project PIP No. 0534
from \linebreak CONICET-Univ. Austral (Rosario, Argentina) and
AFOSR-SOARD Grant FA 9550-14-1-0122. The authors would like to thank
two anonymous referees for their constructive comments which
improved the readability of the manuscript.


\end{document}